\theoremstyle{theorem}
\theoremstyle{definition}
\newtheorem{thm}{Theorem}
\newtheorem{cor}{Corollary}
\newtheorem{lem}{Lemma}
\newtheorem{prop}{Proposition}
\newtheorem{conj}{Conjecture}
\newtheorem{defn}{Definition}
\newtheorem{rem}{Remark}
\newtheorem{exa}{Example}
\newcommand{\secref}[1]{\S\ref{#1}}
\newcommand{\corref}[1]{Corollary~\ref{#1}}
\newcommand{\field}{\mathbb{F}}
\newcommand{\FF}{\mathbb{F}}
\newcommand{\bbn}{\mathbb{N}}
\newcommand{\RR}{{\mathbb R}}
\newcommand{\bbz}{\mathbb{Z}}
\newcommand{\binomial}[2]{\genfrac{(}{)}{0pt}{}{#1}{#2}}
\renewcommand{\P}{{\mathcal P}}
\def\gb(#1){{{f_{#1,1}(x)}}}
\newcommand{\DU}[1]{\Omega(#1)}
\newcommand{\UD}[1]{{\overline{\Omega}(#1)}}
\newcommand{\MV}[1]{{f_{#1,1}}}
\newcommand{\into}{\hookrightarrow}
\newcommand{\onto}{\to \kern -7pt \to}
\newcommand{\iso}{\into \kern -5pt\onto}
\newcommand{\revonto}{\leftarrow \kern -5pt \leftarrow}
\newcommand{\Fp}{\field_p}
\newcommand{\calo}{\mathcal{O}}
\newcommand{\calp}{\mathcal{P}}
\newcommand{\calq}{\mathcal{Q}}
\newcommand{\calz}{\mathcal{Z}}
\newcommand{\skipcomment}[1]{}
\newcommand{\Gr}{{\rm Gr}}
\newcommand{\Fib}[1]{{\text{Fib}(#1)}}
\newcommand{\Fb}{{\text{Fib}}}
\newcommand{\sFb}{{\text{sFib}}}
\newcommand{\Fibpoly}[1]{{\text{Fib}_{#1}}}
\newcommand{\Q}{{\mathcal Q}}
\newcommand{\appearance}{{appearance}}
\newcommand{\Frob}{{\mathcal F}}
\newcommand{\im}{\operatorname{Im}}
\DeclareMathOperator{\GL}{GL}
\DeclareMathOperator{\Span}{span}
\newcommand{\supp}[1]{{\text{supp}({#1})}}
\newcommand{\legendre}[2]{\genfrac{(}{)}{}{}{#1}{#2}}
\newcommand{\semidirect}{\mathbin{\times \kern -1.6pt \vrule height 6pt width 0.5 pt depth 0pt}\ }
\newcommand{\set}[1]{\{#1\}}
\begin{document}

\title{Finite Planes, Zigzag Sequences, Fibonacci Numbers, Artin's Conjecture and Trinomials}
\markright{Planes in Finite Fields}

\author{H E A Campbell and D L Wehlau}
\date{\today}

\date{\today}
\begin{abstract}
We begin by considering faithful matrix representations of elementary abelian groups in prime characteristic.
The representations considered are seen to be determined up to change of bases by a single number.
Studying this number leads to a new family of polynomials which exhibit a number of special properties.
 These polynomials satisfy a three term recursion and are closely related to zigzag zero-one sequences.
 Interpreting the polynomials for the ``prime" 1 yields the classical Morgan-Voyce polynomials, which form two
orthogonal families of polynomials and which have applications in the study of electrical resistance.
 Study of the general polynomials reveals deep connections with the Fibonacci series, the order of appearance of prime numbers in the Fibonacci sequence, the order of elements in cyclic groups, Artin's conjecture
 on primitive roots and the factorization of trinomials over finite fields.
%
\end{abstract}

\maketitle

\medskip
\tableofcontents
\medskip
\newpage

\section{Introduction}
In this article we follow a mathematical path that winds through the topics listed in our title.  These seemingly unrelated  subjects are shown to have fundamental hidden connections.  We provide complete proofs of the results we present here.  Our principal tool is mathematical induction and  all of our proofs are accessible to a good undergraduate mathematics student.   The title topics are all fascinating areas of study and we are only able to provide a glimpse of each in turn.   The reader whose has not yet seen these topics will have a gentle introduction to each subject and an invitation to explore it further.

\section{Representations of $C_p \times C_p$}
This paper arose from a collaboration with Jianjun Chuai and R.~J.~Shank, \cite{Campbell+Chuai+Shank+Wehlau:17}.
Let $p$ be a prime number.  We denote the cyclic group of order $p$ by $C_p$ and let $\field$ denote the algebraic closure of the finite prime field $\field_p$.  We consider two-dimensional faithful representations of the group $C_p \times C_p = \langle \sigma_1,\sigma_2\rangle$ defined over $\field$.  Let $\rho : C_p \times C_p \to \GL_2(\field)$ be such a representation and let $\sigma_1,\sigma_2$ be fixed generators of $C_p \times C_p$.   Since $\sigma_1$ and $\sigma_2$ commute we may put
 $\rho(\sigma_1)$ and $\rho(\sigma_2)$ in upper triangular form simultaneously.  Then we have
    $$
        \rho(\sigma_i) = \begin{pmatrix} 1 & u_i\\ 0 & 1 \end{pmatrix} \quad \text{ where } u_i \in \field\ .
    $$
Since $\rho$ is faithful, $\{u_1,u_2\}$ is an ordered basis for $\calp_\rho := \Span_{\Fp}\{u_1,u_2\} = \im(\rho)$, a two dimensional $\field_p$-plane in $\field$.  Moreover any such ordered basis of a plane yields a representation of $C_p \times C_p$.

Two such representations $\rho$ and $\rho'$ are equivalent if there is a change of basis matrix $A\in \GL_2(\FF)$ such that $A \rho A^{-1} = \rho'$.  A simple computation shows that $A  \rho  A^{-1} (\sigma_i) = \rho'(\sigma_i)$ for $i=1,2$ if and only if there exists $c \in \field^*$ such that $c u_i  = u_i'$ for $i=1,2$.

Thus we see that the problem of classifying characteristic $p$ representations of $C_p \times C_p$ up to equivalence amounts to determining the orbits of two dimensional $\field_p$-subspaces under the action of $\field^*$ by dilation.

\section{Invariant theory problem and solution}  
The problem of describing all the orbits under a group action lies in the domain of invariant theory, a classical subject associated with Hilbert and the birth of modern algebra. For our purposes suppose  $V$ is an $n$ dimensional $\field$ vector space and consider an ordered basis $\{x_1,x_2,\dots,x_n\}$ for the dual space $V^* := \text{hom}(V,\field)$.  The group $\GL(V)\cong\GL(n,\field)$ acts naturally on $V$.  This induces an action on $V^*$ via $(\sigma\cdot f) (v) = f(\sigma^{-1}\cdot v)$ for all $f \in V^*$, $\sigma \in \GL(V)$ and $v \in V$.   Here $\sigma^{-1}$ is used (rather than just $\sigma$) in order to make this a left action on $V^*$.  Extending this action multiplicatively and additively we get an induced an action of $\GL(n,\field)$ on the polynomial ring $\field[x_1,x_2,\dots,x_n]$.  A polynomial $f \in \field[x_1,x_2,\dots,x_n]$ is said to be $\GL(n,\field)$-invariant if $\sigma\cdot f = f$ for all $\sigma \in \GL(V)$.  The set of invariant polynomials forms a subalgebra denoted by $\field[x_1,x_2,\dots,x_n]^{\GL(V)}$.

Following Dickson \cite{Dickson-BinaModuGrouthei:11}  we write $[e_1\, e_2\, \dots \, e_n]$ to denote the determinant of the $n \times n$ matrix whose $ij$ entry is $x_j$ raised to the power ${p^{e_i}}$ where each $e_i$ is  a non-negative integer.  Dickson proved the celebrated theorem that in this setting the ring of invariant polynomials is the polynomial ring $\field[I_0,I_1,\dots,I_{n-1}]$ where
    $$
        I_k = [0\,1\,\dots\,\widehat{k}\dots n]/[0\,1\,\dots (n-1)]\ .
    $$
Note that each $I_k$ is a polynomial and that $I_0 = [0\,1\,\dots\,(n-1)]^{p-1}$.

In this paper we will concentrate on the case $n=2$:
    $$
        \field[x,y]^{\GL(2,\field_p)} = \field[I_0,I_1]
    $$
  where
    \begin{align*}
        I_1 = [0,2]/[0,1] &= \frac{\left| \begin{matrix}  x&y \\x^{p^2} & y^{p^2}\end{matrix}\right|}
                                         {\left| \begin{matrix} x&y\\x^{p} & y^{p}\end{matrix}\right|}
                                         = \sum_{j=0}^{p} x^{j(p-1)}y^{(p-j)(p-1)}\\
        \text{ and }&\\
        I_0 = [1,2]/[0,1] &= \frac{\left| \begin{matrix}  x^p&y^p\\x^{p^2} & y^{p^2}\end{matrix} \right|}
                                                        {\left| \begin{matrix} x& y \\x^{p} & y^{p}\end{matrix}\right|}  = [0,1]^{p-1}
                                                         = \sum_{j=1}^{p} x^{j(p-1)}y^{(p+1-j)(p-1)}\ .
    \end{align*}
The fact that $I_0$ and $I_1$ are $\GL(2,\field_p)$-invariants is precisely the statement that $I_j(x,y) = I_j(ax+cy,bu+dv)$ for $j=1,0$ for any two bases $\set{x,y}$ and $\set{ax+cy,bx+dy}$ of the $\field_p$ plane spanned by $\set{x,y}$.

For $n=2$ we have $[i,j] = x^{p^i} y^{p^j} - x^{p^j} y^{p^i}$.
\begin{lem}\label{squarebrackets}
We have the following elementary properties of $[i,j]$.
    \begin{enumerate}
        \item $[i,j] = -[j,i]$;
        \item  $[i,j]^p = [i+1,j+1]$; \label{pth powers of brackets}
        \item $ [i,\ell]= [i,j] + [j,k] +[k,\ell]$ for $i < j \le k < \ell$;
        \item $[i,j][k,\ell] - [i,k][j,\ell] + [i,\ell][j,k]=0$;
        \item $[i,j]$ is bi-linear over $\field$;
        \item $[0,j]$ divides $[0,kj]$ for all positive integers $j,k$.
       \end{enumerate}
\end{lem}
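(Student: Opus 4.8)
The plan is to read every $[i,j]$ as the $2\times 2$ determinant
$$[i,j]=\det\begin{pmatrix} x^{p^i} & y^{p^i}\\ x^{p^j} & y^{p^j}\end{pmatrix}=x^{p^i}y^{p^j}-x^{p^j}y^{p^i},$$
and to read off parts (1), (2) and (5) immediately from this description. Part (1) is the antisymmetry of a determinant under interchange of its two rows. Part (2) uses that we are in characteristic $p$, so the Frobenius map $z\mapsto z^p$ is a ring homomorphism; raising the two-term expression to the $p$-th power therefore just advances each exponent $p^i$ to $p^{i+1}$, giving $[i+1,j+1]$. Part (5) is the multilinearity of the determinant: I would record $[i,j]$ as an alternating bilinear function of its two columns $(x^{p^i},x^{p^j})$ and $(y^{p^i},y^{p^j})$, so that it is $\field$-linear in each column separately.

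For parts (3) and (4) I would work with the four row vectors $r_m=(x^{p^m},y^{p^m})$, writing $[a,b]=\det(r_a,r_b)$. Since $r_i,r_j,r_k$ are three vectors in the plane $\field^2$ they satisfy a linear dependence whose coefficients are, up to sign, $[j,k]$, $[i,k]$ and $[i,j]$; pairing that dependence with $r_\ell$ through $\det(\,\cdot\,,r_\ell)$ produces exactly the relation in part (4) (alternatively one expands the three products and checks that the twelve monomials cancel in pairs). Part (3) I would approach by induction on the number of subintervals, reducing the claim to a single additivity step across one interior index and then verifying that step by direct expansion and cancellation of the intermediate terms.

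Part (6) is the one genuinely arithmetic assertion, and I expect it to be the main obstacle. The plan is to factor the bracket into linear forms: pulling out the common factor $xy$ and homogenising the one-variable identity $\prod_{c\in\field_{p^m}^*}(T-c)=T^{p^m-1}-1$, I would obtain
$$[0,m]=-xy\,(x^{p^m-1}-y^{p^m-1}).$$
Divisibility of $[0,j]$ into $[0,kj]$ then reduces to divisibility of $x^{p^j-1}-y^{p^j-1}$ into $x^{p^{kj}-1}-y^{p^{kj}-1}$, which follows from the elementary fact that $u^d-v^d$ divides $u^n-v^n$ whenever $d\mid n$, applied with $d=p^j-1$ and $n=p^{kj}-1$; and indeed $p^j-1$ divides $p^{kj}-1=(p^j)^k-1$. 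The delicate point is to justify carefully the passage from the determinant to its factorisation into linear forms indexed by the subfield $\field_{p^m}\subseteq\field$ before invoking the divisibility of the binomials, so I would state that factorisation precisely and only then apply the $d\mid n$ lemma.
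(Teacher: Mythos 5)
The paper itself offers no argument for this lemma beyond the remark that each property is ``easily verified,'' so there is no official proof to compare against; on its own terms your treatment of (1), (2), (4) and (6) is correct. For (4), the Cramer-type dependence $[j,k]\,r_i-[i,k]\,r_j+[i,j]\,r_k=0$ paired against $r_\ell$ is exactly the standard Grassmann--Pl\"ucker argument, and for (6) the factorization $[0,m]=-xy\,(x^{p^m-1}-y^{p^m-1})$ together with the formal divisibility $u^d-v^d\mid u^n-v^n$ for $d\mid n$ is all that is needed; the detour through $\prod_{c\in\field_{p^m}^*}(T-c)$ and a factorization into linear forms is unnecessary, so the ``delicate point'' you worry about does not actually arise.

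Two genuine problems remain. First, the additivity step on which you propose to base part (3) fails as a polynomial identity: taking $i=0$, $j=k=1$, $\ell=2$, the sum $[0,1]+[1,2]=xy^{p}-x^{p}y+x^{p}y^{p^{2}}-x^{p^{2}}y^{p}$ contains the monomial $xy^{p}$, which does not occur in $[0,2]=xy^{p^{2}}-x^{p^{2}}y$ and cannot be cancelled by the remaining terms, so ``direct expansion and cancellation'' will not close your induction. The telescoping identity does hold after specializing one argument into $\field_p$ (e.g.\ $[i,j](x,1)=x^{p^i}-x^{p^j}$ telescopes), which is presumably the intended reading of the lemma, but for the two-variable bracket as defined your plan stalls exactly at the step you defer to computation, and you should flag this rather than assert it. Second, for part (5) you prove the wrong statement: multilinearity of a determinant in its columns is automatic and carries no content here. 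What the lemma is recording, and what the paper actually uses (e.g.\ for the $\GL_2(\field_p)$-invariance of $I_0$ and $I_1$), is that $(x,y)\mapsto[i,j](x,y)$ is additive in each of $x$ and $y$, which requires the Frobenius identity $(x+x')^{p^i}=x^{p^i}+(x')^{p^i}$, and that it is homogeneous only for scalars in $\field_p$: indeed $[0,1](\lambda x,y)=\lambda xy^{p}-\lambda^{p}x^{p}y$ is not $\lambda\,[0,1](x,y)$ for general $\lambda\in\field$. So the form is $\field_p$-bilinear and alternating rather than $\field$-bilinear, and your column-based reading bypasses the one point that actually needs checking.
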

Each of the above properties is easily verified.  We note that Lemma~\ref{squarebrackets}(\ref{pth powers of brackets}) implies that $I_j(x^p,y^p) = I_j(x,y)^p$ for $j = 0, 1$.

Consider a non-zero scalar $\lambda \in \field$.  Clearly  $I_1(\lambda x, \lambda y) = \lambda^{p(p-1)}I_1(x, y)$ and $I_0(\lambda x, \lambda y)  = \lambda^{(p+1)(p-1)}I_0(x, y)$.

Thus the rational function $I_1^{p+1}/I_0^{p}$ takes on the same value on $(x,y)$ as it does on $(\lambda x, \lambda y)$.

It will be convenient to introduce a sign change and use the function
     $$
        \nu := - I_1^{p+1}/I_0^{p} = -\frac{[0,2]^{p+1}[0,1]^{p}}{[0,1]^{p+1}[1,2]^p} = -\frac{[0,2][1,3]}{[0,1][2,3]} .
     $$
Note that $I_0(x,y)=0$ if and only if $\{x,y\}$ is linearly dependent over $\field_p$.  Thus $\nu$ is well defined on the set of two dimensional $\field_p$-planes contained in $\field$.  For a plane $\calp$ we will write $\nu(\calp)$ to denote $\nu(x,y)$ where $\{x,y\}$ is any basis of $\calp$.

\section{Planes in $\field_{p^m}$}\label{planes}
We let $\Gr_2(\field)$ denote the Grassmannian of two dimensional $\field_p$-planes contained in $\field$.  The Grassmannian is an important algebraic object, much studied and possessing lots of structure.  For us it will suffice to consider $\Gr_2(\field)$ as merely the set of $\field_p$-planes contained in $\field$. The group $\field^*$ acts on $\Gr_2(\field)$ and we will denote by $\calo(\calp)$ the $\field^*$-orbit of the plane $\calp \in \Gr_2(\field)$.  We note that $\nu$ is a bijection between the $\field^*$-orbits on $\Gr_2(\field)$ and $\field$.

Given a faithful representation $\rho:C_p \times C_p \to \field$ a natural question is to ask for the smallest field $\field_{p^m}$ over which a representation equivalent to $\rho$ exists.  The simplest answer is the smallest field containing $u_1/u_2$.  We can also give a more complicated answer by working with $\nu(\rho)$ as follows.

For every positive integer $m$, the field $\field_{p^m}$ of order $p^m$ is contained in $\field$.  In fact $\field=\cup_{m=1}^\infty \field_{p^m}$, and $\Gr_2(\field) = \cup_{m=1}^{\infty}\Gr_2(\field_{p^m})$ where each of the sets $\Gr_2(\field_{p^m})$ of two dimensional $\field_p$ planes in $\field_{p^m}$ is finite,   see Proposition~\ref{number_of_orbits} below.  Thus the function $\nu$ takes on only finitely many values $\calz(m) := \{z_1,z_2,\dots,z_\ell\}$ when restricted to $\Gr_2(\field_{p^m})$.

One plane in $\Gr_2(\field)$ is distinguished, namely the plane
    \begin{align*}
        \field_{p^2} &= \{x \in \field \mid x^{p^2}-x=0\} = \{x \in \field \mid [0,2](x,1)=0\} \\
            &=  \{x \in \field \setminus \field_p \mid \nu(x,1)=0\}\ .
    \end{align*}
Clearly $0 \in \calz(m)$ if and only if $\field_{p^2} \subset \field_{p^m}$ if and only if $m$ is even.  Again it will be convenient to exclude the value 0 and so we define
    \begin{align*}
       \Gr_2^\circ(\field) &= \Gr_2(\field) \setminus \calo(\field_{p^2}) \\
       &\mbox{ and } \\
    \calz^\circ(m) &= \calz(m) \setminus \{0\}\ .
    \end{align*}

We consider the polynomial having all simple roots which vanishes at the points of $\calz^\circ(m)$:
    $$
        f_{m,p}(X) := \prod_{z \in \calz^\circ(m)}(X-z) \in \field[X]\ .
    $$
Let $\calp \in \Gr_2(\field)$ have basis $\{x,y\}$ and suppose $\nu(x,y) \neq 0$.   Then the $\field^*$-orbit of $\calp$ includes a subplane of $\field_{p^m}$ if and only if $f_{p,m}(\nu(x,y)) = 0$.  In other words, the representation of $C_p \times C_p$ corresponding to the plane $\calp$
 is defined over $\field_{p^m}$ if and only if $f_{p,m}(\nu(x,y)) = 0$.  The polynomials $f_{m,p}(X)$ will play a central role in the rest of this article.

 Let $\ell(m,p)=\#\calz(m)$ denote the number of orbits of $\field_p$-planes in the field $\field_{p^m}$.  Using the following proposition we obtain explicit expressions for $\deg(f_{m,p}(X))$.

\begin{prop}\label{number_of_orbits}
The number of $\field^*_{p^m}$-orbits of planes in $\field_{p^m}$ is
    $$
       \ell({m,p}) = \begin{cases}
              \qquad \frac{p^{m-1}-1}{p^2-1}, & \mbox{if } m \mbox{ is odd};\\
             1+\frac{p^{m-1}-p}{p^2-1}, & \mbox{if } m \mbox{ is even}.\\
      \end{cases}
    $$
Hence
    $$\deg(f_{m,p}(X)) =
        \begin{cases}
          \frac{p^{m-1}-1}{p^2-1}, & \mbox{ if } m \mbox{ is odd};\\
          \frac{p^{m-1}-p}{p^2-1}, & \mbox{if } m \mbox{ is even}.
        \end{cases}
    $$
\end{prop}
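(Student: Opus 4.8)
The plan is to count the two-dimensional $\Fp$-planes directly and then determine the size of each orbit via the orbit--stabilizer theorem, after which the number of orbits is obtained by division. First I would count the total number of planes: regarding $\field_{p^m}$ as an $m$-dimensional vector space over $\Fp$, this number is the Gaussian binomial coefficient $\frac{(p^m-1)(p^{m-1}-1)}{(p^2-1)(p-1)}$, which one sees by dividing the number $(p^m-1)(p^m-p)$ of ordered linearly independent pairs by the number $(p^2-1)(p^2-p)$ of ordered bases of a fixed plane.

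The key step is to identify the stabilizer $\stab(\calp) = \{c \in \field_{p^m}^* : c\calp = \calp\}$ of a plane $\calp$. I would show that $R := \{0\} \cup \stab(\calp)$ is a subfield of $\field_{p^m}$ containing $\Fp$. Closure under multiplication and inverses is immediate, and $\Fp \subseteq R$ since $\calp$ is an $\Fp$-subspace; for addition, if $c,d \in \stab(\calp)$ then $(c+d)\calp \subseteq c\calp + d\calp = \calp$, and because multiplication by a nonzero scalar is injective this containment forces $c+d \in R$. As a finite integral domain containing $\Fp$, $R$ equals $\field_{p^d}$ for some divisor $d$ of $m$, and $\calp$ is then a vector space over $\field_{p^d}$. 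Comparing $\Fp$-dimensions gives $2 = d \cdot \dim_{\field_{p^d}}\calp$, so $d \in \{1,2\}$. The case $d=2$ occurs exactly when $\calp = a\,\field_{p^2}$ for some $a \in \field_{p^m}^*$, which requires $\field_{p^2} \subseteq \field_{p^m}$, i.e. $m$ even; such planes constitute the single orbit $\calo(\field_{p^2})$ and have stabilizer $\field_{p^2}^*$. Every other plane has $d=1$, stabilizer $\Fp^*$, and hence orbit size $(p^m-1)/(p-1)$.

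With the orbit sizes in hand I would finish by counting. When $m$ is odd, $\field_{p^2} \not\subseteq \field_{p^m}$, so every plane has an orbit of size $(p^m-1)/(p-1)$, and dividing the total count by this size yields $\frac{p^{m-1}-1}{p^2-1}$. When $m$ is even, I would first note that the special planes $a\,\field_{p^2}$, being the one-dimensional $\field_{p^2}$-subspaces of $\field_{p^m}$, number exactly $(p^m-1)/(p^2-1)$, confirming they form one orbit; subtracting these from the total and dividing the remainder by $(p^m-1)/(p-1)$ gives $\frac{p^{m-1}-p}{p^2-1}$ further orbits, for a total of $1 + \frac{p^{m-1}-p}{p^2-1}$. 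These are precisely the stated values of $\ell(m,p)$.

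Finally, the degree formula follows from the orbit count together with the bijection between orbits and values of $\nu$, which gives $\#\calz(m) = \ell(m,p)$. Since $\deg f_{m,p} = \#\calz^\circ(m)$ equals $\ell(m,p)$ when $m$ is odd (as $0 \notin \calz(m)$) and $\ell(m,p)-1$ when $m$ is even (the orbit $\calo(\field_{p^2})$ being the unique one mapped to $0$), both cases reduce to the displayed expressions. I expect the main obstacle to be the verification that $\{0\}\cup\stab(\calp)$ is a subfield; once this structural fact pins the stabilizer down to either $\Fp^*$ or $\field_{p^2}^*$, the remaining arithmetic is routine.
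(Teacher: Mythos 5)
Your proposal is correct and follows essentially the same route as the paper: count the planes of $\Gr_2(\field_{p^m})$ via ordered bases, apply orbit--stabilizer, treat the orbit of $\field_{p^2}$ separately when $m$ is even, and deduce the degree formula from $\deg f_{m,p}=\#\calz^\circ(m)$. The one place you go beyond the paper is the argument that $\{0\}\cup\stab(\calp)$ is a subfield $\field_{p^d}$ with $d\mid\gcd(2,m)$, which cleanly justifies the stabilizer identification that the paper simply asserts; this is a worthwhile addition rather than a different proof.
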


\begin{proof}
Clearly if an $\field^*$-orbit meets $\Gr_2(\field_p^m)$ then its intersection with  $\Gr_2(\field_p^m)$ is an $\field_{p^m}^*$-orbit.  The number of ordered bases for a plane in $\field_{p^m}$ is given by $(p^m-1)(p^m-p)$.  Each $\field_p$-plane has $(p^2-1)(p^2-p)$ ordered bases.  Thus there are $\frac{(p^m-1)(p^m-p)}{(p^2-1)(p^2-p)}$ elements of $\Gr_2(\field_{p^m})$.  The group $\field_{p^m}^*$ acts on these planes by scalar multiplication.

For $m$ odd each plane $\P$ has stabilizer $\{\lambda \in \field^* \mid \lambda \P = \P\}$ is $\field_p^*$ and so each $\field_{p^m}$-orbit has size $\frac{p^m-1}{p-1}$.  Thus for $m$ odd there are $\frac{p^{m-1}-1}{p^2-1}$ orbits of planes in $\field_{p^m}$.

When $m$ is even the stabilizer of most planes is again $\field_p^*$.  However the plane $\field_{p^2}$ and those planes in its orbit are stabilized by the subgroup $\field_{p^2}^* \subset \field_{p^m}^*$.  Hence this orbit has size $\frac{p^m-1}{p^2-1}$.  Thus there are $\frac{(p^m-1)(p^m-p)}{(p^2-1)(p^2-p)} - \frac{p^m-1}{p^2-1}$ planes not in the orbit of $\field_{p^2}$.  Each of these planes lies in an orbit of size $\frac{p^m-1}{p-1}$ and thus there are $1+ \frac{p^{m-1}-p}{p^2-1}$ of orbits in $\field_{p^m}$ when $m$ is even.
Since
  $$
    \deg(f_{m,p}(X)) = \#\calz^\circ(m) =
        \begin{cases}
                 \ell(m,p), & \mbox{if $m$  \text{ is odd}};\\
                 \ell(m,p)-1, & \mbox{if } m \mbox{ is even}.
               \end{cases}
  $$
the second assertion of the proposition follows.
 \end{proof}

Given a plane $\calp \in \Gr_2(\field)$ with basis $\set{x,y}$, its $\field^*$-orbit contains the plane spanned by $\set{x/y,1}$, and this plane contains the line $\field_p$.  In fact it is not difficult to see that every $\field^*$-orbit of every plane in $\Gr_2^\circ(\field)$ contains exactly $p+1$ planes containing $\field_p$, thus forming a {\em pencil} of planes.  For $z \in \field^*$, we will write $\Q^z$ to denote the pencil of $p+1$ planes $\calp_1, \calp_2,\dots,\calp_{p+1}$ each of which contain $\field_p$ and satisfy $\nu(\calp_i)=z$. We write $\widehat{\calp_i} := \calp_i \setminus \field_p$ and $\widehat{\calq^z} := \calq^z \setminus \field_p = \sqcup_{i=1}^{p+1} \,\widehat{\calp_i}$.  Then $\#\widehat{\calp_i}=p^2-p$, $\# \widehat{\calq^z} = p^3-p$ and $\#\calq^z=p^3$.

\begin{exa}
Suppose $p=3$, $m=6$ and consider the planes in $\field_{729}$.  There are $11,011$ many planes inside $\field_{p^6}$ and these come in $31$ orbits.
Hence $\calz = \set{z_0,z_1, \dots, z_{30}}$.  One of these orbits is associated to $V_0 = \field_{9}$ and has size $728/8 = 91$, we have $z_0 = \nu(V_0) = 0$ and $\widehat{\calq^0} = \field_9 \setminus \field_3$.  We observe that $V_0$ is the only plane in its orbit containing $\field_p$.

The other $30$ orbits have size $728/2=364$.  Each of these orbits contains a pencil of $4$ planes containing the line $\field_p$.  We observe that $\field_{27} \subset \field_{729}$ so that any plane, say $V_1 \subset  \field_{27}$, has $z_1 = \nu(V_1) = -1$.  Thus, $\field_{27} = \calq^1$.  We also have $\#\widehat{\calq^{z_i}} = 24$ for all $1 \le i \le 30$.
\end{exa}

\begin{exa}\label{eg:collapsing}
Consider $p=3$ and $z=-1-\sqrt{-1} \in \field_9$.  The pencil of 4 planes associated to this value of $z$ lies in
$\field_{81}$.   Each of the four planes satisfies  $\widehat{\P_i} \subset \field_{81} \setminus \field_9$ and $\nu(\P_i)=z$.
For two of the planes, say $\P_1$ and $\P_2$
we find $I_0(\P_1),I_0(\P_2) \in \field_9$ with $I_0(\P_1)=z$ and $I_0(\P_2)=z^3$.
For the other two planes $I_0(\P_3)^9 = I_0(\P_4) \notin \field_9$.
%
%
\end{exa}

\section{The polynomials $f_{m,p}(X)$}
The first few of these polynomials are given by
    \begin{align*}
        f_{2,p}(X) & = 1\\
        f_{3,p}(X) & = X+1\\
        f_{4,p}(X) & = X^p + X^{p-1} + 1\\
        f_{5,p}(X) & = X^{p^2+1} + X^{p^2} + X^{p^2-p+1} + X + 1\\
        f_{6,p}(X) & = X^{p^3+p} + X^{p^3+p-1} + X^{p^3} + X^{p^3-p^2+p} + X^{p^3-p^2+p-1}
                   + X^p + X^{p-1} + 1\\
        f_{7,p}(X) & =  X^{p^4+p^2+1} + X^{p^4+p^2} + X^{p^4+p^2-p+1} + X^{p^4+1} + X^{p^4}
           + X^{p^4-p^3+p^2+1}\\
            & \quad+ X^{p^4-p^3+p^2} + X^{p^4-p^3+p^2-p+1}
           +   X^{p^2+1} + X^{p^2} + X^{p^2-p+1} + X + 1
\end{align*}
These functions exhibit a number of fascinating and surprising properties.  Some of these are given in the following theorem.   One fact worth noting is that we can express each $f_{m,p}$ as a simple expression in $p$.  A priori we see no reason why this should have been true.  Indeed it is perhaps surprising that $f_{m,p}(X)$ lies in $\field_p[X]$.

\begin{thm}\label{properties of fmp}
    \begin{enumerate}
        \item\label{recursive relation} Define  $\theta(r,p)\! :=\! p^r\! -\! p^{r-1}\! +\! \dots \!+\! (-1)^r$.  Then
            $$
                f_{m,p}(X) = X^{\theta(m-3,p)} f_{m-1,p}(X) + f_{m-2,p}(X)\ .
            $$
        \item\label{01 coeffs} All coefficents of $f_{m,p}$ are either 0 or 1, that is,
            $$
                f_{m,p}(X) = \sum_{i \in \supp{m,p}} X^i
            $$
         where
            $$
                \supp{m,p} = \{i \in \bbn \mid \text{the coefficient of }X^i \text { in } f_{m,p}\text{ is non-zero}\}\ .
            $$
        \item\label{fibb} The number of terms in $f_{m,p}(X)$ is  $f_{m,p}(1)=\Fib{m}$ where $\Fib{m}$ is the $m^{\rm th}$ Fibonacci number.
        \item\label{strong div seq} This family of polynomials for a fixed $p$ forms a strong division sequence, that is,
            $$
                \gcd(f_{m,p}(X),f_{n,p}(X)) = f_{\gcd(m,n),p}(X)\ .
            $$
        \item\label{odd binate} If $m$ is odd and $i \in \supp{m,p}$ then the base $(-p)$ expansion of $i$ involves only the digits 0 and 1.
        \item\label{even binate} If $m$ is even and $i \in \supp{m,p}$ then the base $(-p)$ expansion of $-i$ involves only the digits 0 and 1.
        \item \label{direct limit}  We have that $\supp{m,p} \subset \supp{m+2,p}$ for all $m$.
  \end{enumerate}
\end{thm}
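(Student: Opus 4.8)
The linchpin is the three-term recursion, part~(\ref{recursive relation}); once it is in hand I expect parts (\ref{01 coeffs}), (\ref{fibb}), (\ref{odd binate}), (\ref{even binate}) and (\ref{direct limit}) to cascade by induction, while part~(\ref{strong div seq}) needs a separate idea. First I would build a one-variable engine. Working with a plane $\langle t,1\rangle$ containing $\field_p$ and writing $u:=I_0(t,1)=[0,1]^{p-1}$, the bracket identities of Lemma~\ref{squarebrackets} give $[i,i+1](t,1)=[0,1]^{p^i}$, so by telescoping additivity
\[
\frac{[0,m](t,1)}{[0,1](t,1)}=\sum_{i=0}^{m-1}u^{(p^i-1)/(p-1)},\qquad \nu=-\frac{(1+u)(1+u^p)}{u^p}.
\]
In particular $[0,m](t,1)=t-t^{p^m}$ vanishes exactly when $t\in\field_{p^m}$, which is the condition cutting out the roots $\calz^\circ(m)$.

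To prove part~(\ref{recursive relation}) without circularity I would \emph{define} a sequence $P_m(X)$ by the claimed recursion with initial data $P_2=1$, $P_3=X+1$, and then identify $P_m$ with $f_{m,p}$. A routine induction shows $P_m$ is monic of degree $\deg f_{m,p}$ (the degree formula of Proposition~\ref{number_of_orbits} is exactly compatible with the exponent $\theta(m-3,p)$). The crux is then an identity of the shape
\[
P_m(\nu(t))=\varepsilon_m\,u^{-E_m}\,\frac{[0,m](t,1)}{[0,1](t,1)},
\]
for an explicit sign $\varepsilon_m=(-1)^{\deg f_{m,p}}$ and exponent $E_m$, proved by induction on $m$ from the defining recursion of $P_m$ together with the two displayed formulas for $\nu$ and for $[0,m]/[0,1]$. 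Granting this, $P_m$ vanishes precisely on $\calz^\circ(m)$ (the excluded root $\nu=0$, i.e.\ $t\in\field_{p^2}$, being absorbed into the even-$m$ form of the identity); since $P_m$ is monic of degree $\#\calz^\circ(m)$, it must equal $\prod_{z\in\calz^\circ(m)}(X-z)=f_{m,p}$, which gives the recursion.

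The remaining elementary parts then follow by feeding $f_m=X^{\theta(m-3,p)}f_{m-1}+f_{m-2}$ into an induction, the key observation being that the two summands have \emph{disjoint} supports: the inequality $\theta(m-3,p)>\deg f_{m-2,p}$ (immediate from the formulas above) forces $\supp{m,p}=\bigl(\theta(m-3,p)+\supp{m-1,p}\bigr)\sqcup\supp{m-2,p}$. This disjoint union instantly yields the inclusion $\supp{m,p}\subset\supp{m+2,p}$ of part~(\ref{direct limit}), the $0$/$1$ coefficients of part~(\ref{01 coeffs}), and the count $\#\supp{m,p}=\#\supp{m-1,p}+\#\supp{m-2,p}$ with base values $1,2$, i.e.\ the Fibonacci statement of part~(\ref{fibb}). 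For parts (\ref{odd binate}) and (\ref{even binate}) I would track base-$(-p)$ digits through the same recursion: since $\theta(m-3,p)=\tfrac{p^{m-2}+(-1)^{m-3}}{p+1}$ is, up to sign, a repunit in base $(-p)$, the shift by $\theta(m-3,p)$ prepends admissible digits, and the alternation of sign with the parity of $m$ is exactly what forces the digit condition on $i$ for $m$ odd and on $-i$ for $m$ even.

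Part~(\ref{strong div seq}) is where I expect the real difficulty. One inclusion is free from the root description: if $m\mid n$ then $\field_{p^m}\subseteq\field_{p^n}$, so $\calz^\circ(m)\subseteq\calz^\circ(n)$ and $f_{m,p}\mid f_{n,p}$; applying this to $\gcd(m,n)$ gives $f_{\gcd(m,n),p}\mid\gcd(f_{m,p},f_{n,p})$. The reverse divisibility is the hard part: a common root $z$ corresponds to an $\field^*$-orbit of planes meeting both $\field_{p^m}$ and $\field_{p^n}$, and I must show such an orbit already meets $\field_{p^{\gcd(m,n)}}$. Equivalently, the set of indices $m$ over which a fixed orbit is realizable must be closed under $\gcd$; since every plane in an orbit can be scaled to a pencil plane $\langle t,1\rangle$ with $t\in\field_{p^m}\iff[\field_p(t):\field_p]\mid m$, this reduces to controlling the degrees of the $p+1$ pencil generators and how Frobenius permutes them—exactly the phenomenon displayed in Example~\ref{eg:collapsing}. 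Establishing this $\gcd$-closure (or, as a fallback, proving an addition formula for the $f_{m,p}$ and mimicking the classical proof that the Fibonacci numbers are a strong divisibility sequence) is the step I expect to fight hardest for; by contrast the $u$-identity behind part~(\ref{recursive relation}), while computational, should go through cleanly once the correct even/odd normalization of $E_m$ is pinned down.
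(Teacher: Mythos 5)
Your architecture matches the paper's almost exactly. The paper also proves part~(\ref{recursive relation}) by exhibiting explicit bracket-polynomial expressions $F_{m,p}$ --- essentially your $\varepsilon_m u^{-E_m}[0,m]/[0,1]$, except that in the even case there is an extra factor $\frac{[1,2]}{[0,1][0,2]}$ whose role is precisely to absorb the excluded root $\nu=0$, as you anticipate --- then verifying the three-term recursion for the $F_{m,p}$ by a bracket computation relegated to an appendix, and finally identifying $F_{m,p}$ with $f_{m,p}$ exactly as you propose: both are monic of the same degree as polynomials in $\nu$ and vanish on all of $\calz^\circ(m)$. Parts (\ref{01 coeffs}), (\ref{fibb}) and (\ref{direct limit}) are obtained from the same disjoint-support decomposition $\supp{m,p}=(\theta(m-3,p)+\supp{m-1,p})\sqcup\supp{m-2,p}$ that you describe, and parts (\ref{odd binate}) and (\ref{even binate}) by exactly your digit-tracking through the recursion, which the paper packages as the zigzag-sequence formula of Theorem~\ref{thm:main}.

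The one place you stall --- the reverse divisibility in part~(\ref{strong div seq}) --- is actually the easiest step, and no analysis of how Frobenius permutes the pencil is needed. The key observation is that for fixed $z\neq 0$ the set of $m$ with $z\in\calz^\circ(m)$ is the set of multiples of a single integer. Indeed, if some plane $\calp$ of the orbit with $\nu(\calp)=z$ lies in $\field_{p^m}$, choose a basis $\{x,y\}$ of $\calp$; then $x/y\in\field_{p^m}$, so the pencil plane $\calp_1=\langle x/y,1\rangle$ of that orbit also lies in $\field_{p^m}$. Moreover any two pencil planes of the orbit satisfy $\calp_j=c\,\calp_i$ with $c^{-1}\in\calp_i$ (because $1\in\calp_j$), so each is contained in the subfield generated by the other, and all $p+1$ of them generate the same field $\field_{p^{m_0}}$ over $\field_p$. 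Hence the orbit meets $\Gr_2(\field_{p^m})$ if and only if $m_0\mid m$, i.e.\ if and only if the single fixed plane $\calp_1$ lies in $\field_{p^m}$. Since every $f_{m,p}$ is squarefree by construction, $\gcd(f_{m,p},f_{n,p})=\prod_{z\in\calz^\circ(m)\cap\calz^\circ(n)}(X-z)$, and by the above $z\in\calz^\circ(m)\cap\calz^\circ(n)$ exactly when $\calp_1\subseteq\field_{p^m}\cap\field_{p^n}=\field_{p^{\gcd(m,n)}}$, i.e.\ exactly when $z\in\calz^\circ(\gcd(m,n))$. This is the paper's (terser) argument; your fallback via an addition formula for the $f_{m,p}$ is unnecessary.
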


\begin{proof}
The key property here is the recursive relation expressed in property~(\ref{recursive relation}).  We will defer the proof of property~(\ref{recursive relation}) until later.  Here we show how most of the other statements in the Proposition follow quickly from (\ref{recursive relation}) with the proofs of properties~(\ref{odd binate}) and (\ref{even binate}) following from Theorem~\ref{thm:main} below.

All the above properties are easily verified for $m=2,3$.  Hence we suppose that $m \geq 4$ and proceed by induction assuming property~(\ref{recursive relation}) is true.  Evaluating at $X=1$ the recursive relation becomes $f_{m,p}(1) = f_{m-1,p}(1) + f_{m-2,p}(1)$ which implies (\ref{fibb}) by induction.

We may derive (\ref{strong div seq}) from (\ref{recursive relation}) but we prefer to give a more geometric proof.  We know the linear factors of $f_{m,p}(X)$ are the linear polynomials $X-\nu(\calp)$ where $\calp$ is a plane in $\field_{p^m}$  that is not in the orbit of $\field_{p^2}$.  Such a factor will divide both $f_{m,p}(X)$ and $f_{n,p}(X)$ if and only if $\calp$ lies in both $\field_{p^m}$ and  $\field_{p^n}$.  But $\field_{p^m} \cap \field_{p^n} = \field_{p^g}$ where $g = \gcd(m,n)$.  Thus $\gcd(f_{m,p}(X),f_{n,p}(X)) = \prod_{\substack{\calp \subset \field_{p^g}\\ \calp \ne \field_{p^2}}} (X-\nu(\calp)) = f_{g,p}(X)$.

We have $\supp{m,p} :=  \{i \in \bbn \mid \text{the coefficient of }X^i \text { in } f_{m,p}(X)\text{ is non-zero}\}$.  It is easy to verify that  $\theta(m-3,p) =\deg(X^{\theta(m-3,p)}) > \deg(f_{m-2,p})(X)$ for $m \geq 3$.  Hence we see that the set $(\theta(m-3,p)+\supp{m-1,p}) := \{\theta(m-3,p) + i \mid i \in \supp{m-1,p}\}$ is disjoint from $\supp{m-2,p}$.  Thus (\ref{recursive relation}) implies that $\supp{m,p} = (\theta(m-3,p)+\supp{m-1,p}) \sqcup \supp{m-2,p}$.  Here $\sqcup$ denotes the union of disjoint sets.  From this,  (\ref{01 coeffs}) follows given that the only coefficients of $f_{2,p}(X)$ and $f_{3,p}(X)$ are $0$ and $1$.


Property (\ref{direct limit}) is clear from the above.

\end{proof}

\begin{defn}
In light of Theorem~\ref{properties of fmp}(\ref{recursive relation}), it is natural  to define $f_{1,p}(X)=1$ and $f_{0,p}(X)=0$.  These definitions make the above theorem hold for all $m \geq 0$.
\end{defn}

\begin{rem}  The recursive property (\ref{recursive relation}) is surprising to us as it implies that the values
of $\nu$  for planes in $\FF_{p^m}$ is determined by the values for planes in
$\FF_{p^{m-1}}$ and $\FF_{p^{m-2}}$.
\end{rem}

\begin{rem}
It may be surprising to some readers to see a negative integer used as a base.  However, there is no obstruction to using negative bases and indeed they possess the advantage that negative integers may be expressed without requiring the use of a minus sign.
\end{rem}

\section{Zigzag Sequences}
We will consider finite integer sequences.    Let
    $$
        V=(v_{n-1},v_{n-2},\dots,v_1,v_0)
    $$
be a sequence of integers.  We say the length of this sequence is $n$.  Given any integer $b$ we interpret $V$ as a base $b$ representation of a number denoted $||V||_b$, that is, $||V||_b := \sum_{i=0}^{n-1} v_i b^i$.  Note we do not constrain the values of the integers $v_i$ nor the value of the integer $b$.

Given $n \ge m$ and $V=(v_{n-1},v_{n-2},\dots,v_0)$, $W=(w_{m-1},w_{m-2},\dots,w_0)$  we write $V+W$ to denote the sequence
    $$
        V+W = (v_{n-1}+w_{n-1},v_{n-2}+w_{n-2},\dots, v_0+w_0)
    $$
where $w_{m}=w_{m+1}=\dots=w_{n-1}=0$.  Also for an integer $c$ we write $c V = (c v_{n-1},c v_{n-2},\dots,c v_1,c v_0)$.  We note that $X^{||c V + W||_b} = (X^{||V||_b})^c \cdot X^{||V||_b}$.

A  finite zero-one sequence of length $n$ is a {\it down-up} or {\it zigzag}  sequence if it is a finite sequence $(\varepsilon_{n-1},\varepsilon_{n-2},\dots,\varepsilon_1,\varepsilon_0)$ of zeros and ones such that
$$\varepsilon_{n-1}\geq \varepsilon_{n-2} \leq \varepsilon_{n-3} \geq \varepsilon_{n-4} \leq \dots \varepsilon_{0}\ .$$
  To emphasize that a sequence is a zero-one sequence we will use $\varepsilon_i$ to denote its entries.
\begin{defn}
We write $\DU{n}$ to denote the set of down-up sequences of length $n$.
\end{defn}

The first few of these sets are
    \begin{align*}
        \DU{0} &= \{(0)\} \text{ (by convention)}\\
        \DU{1} & = \{ (1), (0)\}\\
        \DU{2} & = \{ (11), (10), (00)\}\\
        \DU{3} & = \{ (111), (101), (001), (100), (000)\}\\
        \DU{4} & = \{ (1111), (1011), (0011), (1110), (1010), (0010), (1000), (0000)\}\\
     \end{align*}

Suppose that $n$ is odd and that $u=(\varepsilon_{n-1}, \varepsilon_{n-2}  \dots \varepsilon_1, \varepsilon_{0}) \in \DU{n}$.  Then $\varepsilon_1 \leq \varepsilon_0$.  Hence if $\varepsilon_0=0$ then $\varepsilon_1=0$.  From this it follows that for $n$ odd,
   \begin{align*}
        \DU{n} &= \{ (\varepsilon_{n-1}, \varepsilon_{n-2}  \dots \varepsilon_2,0,0) \mid (\varepsilon_{n-1}, \varepsilon_{n-2}  \dots \varepsilon_2) \in \DU{n-2}\}\\
        &\quad \sqcup  \{ (\varepsilon_{n-1}, \varepsilon_{n-2}  \dots \varepsilon_2,\varepsilon_1,1) \mid (\varepsilon_{n-1}, \varepsilon_{n-2}  \dots \varepsilon_2,\varepsilon_1) \in \DU{n-1}\}\ .
    \end{align*}
Similarly for $n$ even we have
    \begin{align*}
          \DU{n} &= \{ (\varepsilon_{n-1}, \varepsilon_{n-2}  \dots \varepsilon_2,1,1) \mid (\varepsilon_{n-1}, \varepsilon_{n-2}  \dots \varepsilon_2) \in \DU{n-2}\} \\
          &\quad \sqcup  \{ (\varepsilon_{n-1}, \varepsilon_{n-2}  \dots \varepsilon_2,\varepsilon_1,0) \mid (\varepsilon_{n-1}, \varepsilon_{n-2}  \dots \varepsilon_2,\varepsilon_1) \in \DU{n-1}\}\ .
    \end{align*}

From this it follows by induction that $\# \DU{n} = \Fib{n+2}$ for $n \geq 0$.
%
%

\begin{thm}\label{thm:main}
For $m \ge 2$, we have
    $$
        f_{m,p}(X) = \sum_{\varepsilon \in \Omega({m-2})} X^{(-1)^{m-1}||\varepsilon||_{-p}}\ .
    $$
\end{thm}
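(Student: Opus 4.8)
The plan is to prove the identity by induction on $m$, combining the three-term recursion of Theorem~\ref{properties of fmp}(\ref{recursive relation}) with a decomposition of the zigzag sets $\Omega(n)$ according to their leading entry. The base cases $m=2,3$ are immediate: $\Omega(0)=\{(0)\}$ gives $X^{0}=1=f_{2,p}$ (here $(-1)^{m-1}=-1$ but $\|(0)\|_{-p}=0$), and $\Omega(1)=\{(1),(0)\}$ gives $X+1=f_{3,p}$ since now $(-1)^{m-1}=+1$. For $m\ge 4$ I would assume the statement for $m-1$ and $m-2$ and feed it into the recursion.

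First I would record the arithmetic observation that the shift exponent is itself a base $(-p)$ value of an all-ones string. Writing $\theta(r,p)=\sum_{j=0}^{r}(-1)^{j}p^{r-j}=(-1)^{r}\sum_{i=0}^{r}(-p)^{i}$, we get $\theta(m-3,p)=(-1)^{m-1}\|\mathbf{1}\|_{-p}$, where $\mathbf{1}=(1,\dots,1)$ has length $m-2$ and $(-1)^{m-3}=(-1)^{m-1}$. This is exactly what aligns the high-order factor $X^{\theta(m-3,p)}$ with base $(-p)$ expansions. Substituting the inductive hypotheses into the recursion and using the additivity of $V\mapsto\|V\|_{-p}$, the exponents coming from $X^{\theta(m-3,p)}f_{m-1,p}$ become
$$
(-1)^{m-1}\bigl(\|\mathbf{1}\|_{-p}-\|\varepsilon\|_{-p}\bigr)=(-1)^{m-1}\bigl\|(1,\overline{\varepsilon})\bigr\|_{-p},
$$
where $\varepsilon\in\Omega(m-3)$ is padded to length $m-2$ by a leading zero, $\overline{\varepsilon}$ is the bitwise complement, and $(1,\overline{\varepsilon})$ is a length-$(m-2)$ zero-one string with leading entry $1$. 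The exponents coming from the term $f_{m-2,p}$ are simply $(-1)^{m-1}\|(0,0,\delta)\|_{-p}$ with $\delta\in\Omega(m-4)$, a length-$(m-2)$ string with two leading zeros. (The sign works out because $(-1)^{m-2}=-(-1)^{m-1}$ and $(-1)^{m-3}=(-1)^{m-1}$.)

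The crux is then a decomposition of $\Omega(n)$ by its top entry, which I would establish directly from the defining inequalities $\varepsilon_{n-1}\ge\varepsilon_{n-2}\le\varepsilon_{n-3}\ge\cdots$. If the top entry is $0$, then $\varepsilon_{n-1}\ge\varepsilon_{n-2}$ forces $\varepsilon_{n-2}=0$ and leaves $(\varepsilon_{n-3},\dots,\varepsilon_0)\in\Omega(n-2)$; if the top entry is $1$, then $(\varepsilon_{n-2},\dots,\varepsilon_0)$ is an \emph{up-down} sequence of length $n-1$. Since bitwise complementation is a bijection exchanging down-up and up-down sequences, the up-down sequences of length $n-1$ are precisely $\{\overline{\varepsilon}:\varepsilon\in\Omega(n-1)\}$. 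This gives the disjoint decomposition
$$
\Omega(n)=\{(1,\overline{\varepsilon}):\varepsilon\in\Omega(n-1)\}\ \sqcup\ \{(0,0,\delta):\delta\in\Omega(n-2)\}.
$$

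Taking $n=m-2$, these two families match exactly the two families of exponents found above, so the two sums reassemble into $\sum_{\omega\in\Omega(m-2)}X^{(-1)^{m-1}\|\omega\|_{-p}}$, completing the induction. The main obstacle is isolating the correct \emph{high-order} (top-entry) decomposition together with the complement correspondence between down-up and up-down sequences: the low-order recursion for $\Omega(n)$ derived earlier in the excerpt does not align with the high-order shift $X^{\theta(m-3,p)}$, so one must supply this alternative decomposition and keep the parity of the sign $(-1)^{m-1}$ consistent across both recursion terms.
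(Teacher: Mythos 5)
Your proof is correct and follows essentially the same route as the paper: induction via the three-term recursion, the identification $\theta(m-3,p)=(-1)^{m-1}||\omega(m-2)||_{-p}$ for the all-ones string $\omega(m-2)$ of length $m-2$, and the decomposition of $\Omega(m-2)$ into the complements of the (zero-padded) elements of $\Omega(m-3)$ prefixed by a $1$ together with the elements of $\Omega(m-4)$ prefixed by two zeros. The only divergence is in how that set identity is closed: you verify it directly by splitting on the top entry and invoking the complement bijection between down-up and up-down sequences, whereas the paper proves one containment and then matches cardinalities via $\#\Omega(n)=\Fib{n+2}$; both finishes are valid.
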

\begin{proof}
The proof is by induction on $m$.  Write $\omega(r)$ to denote the all ones sequence $(1,1,\dots,1)$ of length $r$.  Then $(-1)^{r}||\omega(r+1)||_{-p} = \theta(r,p)$.



For $m=2$ we have $f_{2,p}(X) = 1 = X^{-||0||_{-p}}$.

For $m=3$ we have $f_{3,p}(X) = X+1 = X^{||(1)||_{-p}} + X^{||(0)||_{-p}}$.

For $m=4$ we have $f_{4,p}(X) = X^p + X^{p-1} + 1 = X^{-||(1,0)||_{-p}} + X^{-||(1,1)||_{-p}} + X^{-||(0,0)||_{-p}}$.

Now by induction and assuming Theorem~\ref{properties of fmp}(\ref{recursive relation}) we have
    \begin{align*}
      f &_{m,p}(X) = X^{\theta(m-3,p)} f_{m-1,p}(X) + f_{m-2,p}(X)\\
        &= X^{(-1)^{m-3}||\omega(m-2)||_{-p}} \sum_{\varepsilon \in \Omega({m-3})} X^{(-1)^{m-2} ||\varepsilon||_{-p}}
            + \sum_{\varepsilon \in \Omega{(m-4)}} X^{(-1)^{m-3} ||\varepsilon||_{-p}}\\
        &= \sum_{\varepsilon \in \Omega({m-3})} X^{||(-1)^{m-3}\omega(m-2)+(-1)^{m-2}\varepsilon||_{-p}}
            + \sum_{\varepsilon \in \Omega({m-4})} X^{(-1)^{m-3}||\varepsilon||_{-p}}\\
        &= \sum_{\varepsilon \in \Omega({m-3})} X^{(-1)^{m-3}||\omega(m-2)-\varepsilon||_{-p}}
            + \sum_{\varepsilon \in \Omega({m-4})} X^{(-1)^{m-3}||\varepsilon||_{-p}}\\
        &=\sum_{\varepsilon \in (\omega(m-2)-\Omega({m-3}))} X^{(-1)^{m-3}||\varepsilon||_{-p}}
            + \sum_{\varepsilon \in \Omega({m-4})} X^{(-1)^{m-3}||\varepsilon||_{-p}}\\
        &=\sum_{\varepsilon \in (\omega(m-2)-\Omega({m-3})) \sqcup \Omega({m-4})} X^{(-1)^{m-1}||\varepsilon||_{-p}}\\
    \end{align*}
Here $\omega(m-2)-\Omega({m-3})$ denotes the set $\{\omega(m-2) - \varepsilon \mid \varepsilon \in \Omega(m-3)\}$.

It remains to show that
    $$
        (\omega(m-2)-\Omega({m-3})) \sqcup \Omega({m-4}) = \Omega(m-2)
    $$
where we are considering the elements of $\Omega({m-4})$ as the elements of $\Omega({m-2})$ whose two leftmost entries are both $0$.

Since each element of $\Omega(m-3)$ is a zero-one sequence of length $m-3$, the set $\omega(m-2)-\Omega({m-3})$ consists of zero-one sequences of length $m-2$ whose leftmost entry is a $1$.  Furthermore since each $\varepsilon \in \Omega(m-3)$ is a down/up sequence, it is easy to see that the sequence $\omega(m-2) - \varepsilon$ is also a down/up sequence.  Hence $(\omega(m-2)-\Omega({m-3})) \sqcup \Omega({m-4}) \subseteq \Omega(m-2)$.  But
    \begin{align*}
        \#\big((\omega(m-2)&-\Omega({m-3})) \sqcup \Omega({m-4})\big) = \#\Omega({m-3}) + \#\Omega({m-4})\\
            & =\Fib{m-1}+\Fib{m-2} =\Fib{m} = \#\Omega(m-2)
          \end{align*}
which completes the proof.
\end{proof}

Note that Theorem~\ref{thm:main} asserts that
  $$
    \supp{m,p} = \{(-1)^{m-1}||\varepsilon||_{-p} \mid \varepsilon \in \DU{m-2}\}\ .
  $$
From this  properties~(\ref{odd binate}) and (\ref{even binate}) of Theorem~\ref{properties of fmp} follow immediately.

In the rest of this section we will show how to complete the proof of Theorem~\ref{thm:main}.  It only remains to prove Theorem~\ref{thm:main}(\ref{recursive relation}).

Define
    \begin{align*}
        F_{1,p} &= F_{2,p} := 1\\
        F_{2k+1,p} &:= (-1)^{k}[0,2k+1] [0,1]^{-\theta(2k,p)}\quad\text{for }k\geq 1 \ \ \text{ and}\\
        F_{2k,p} & := (-1)^{k+1}\frac{[1,2]}{[0,1][0,2]}[0,2k] [0,1]^{-\theta(2k-1,p)} \quad\text{for }k\geq 1 \ .
     \end{align*}
Note that $F_{2k+1,p}(x,y)$ is defined for any pair with $\{x,y\}$ linearly independent.  Similarly $F_{2k,p}(x,y)$ is defined for any pair such that the span of $\{x,y\}$ does not lie in the $\field^*$-orbit of $\field_{p^2}$.

We prove the following proposition in the appendix.
\begin{restatable}{prop}{recursionProp}\label{recursion}
For $m \ge 3$ we have
  $$
    F_{m,p} = \nu^{\theta(m-3,p)}F_{m-1,p} + F_{m-2,p}\ .
  $$
\end{restatable}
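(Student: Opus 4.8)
The plan is to prove the recursion by brute substitution of the explicit bracket formulas, and then to recognize the resulting identity as a single Pl\"ucker relation. First I would record the bracket form $\nu = -[0,2]^{p+1}/[0,1]^{p^2+1}$, which follows from the displayed expression for $\nu$ together with $[1,2]=[0,1]^p$ (Lemma~\ref{squarebrackets}(\ref{pth powers of brackets})). I would also collect the three purely arithmetic facts about $\theta(r,p)$ that drive the exponent bookkeeping: the telescoping identity $\theta(r,p)+\theta(r-1,p)=p^r$, the closed form $(p+1)\theta(r,p)=p^{r+1}+(-1)^r$, and the two-step recursion $\theta(r,p)=p^r-p^{r-1}+\theta(r-2,p)$. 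Finally I would note that $\theta(r,p)$ has the parity of $r+1$ when $p$ is odd, which governs the signs coming from $\nu^{\theta}=(-1)^{\theta}[0,2]^{\cdots}/[0,1]^{\cdots}$; for $p=2$ all signs are trivial in characteristic $2$.

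Since the definition of $F_{m,p}$ alternates with the parity of $m$, I would treat the two cases separately. For $m=2k+1$ (so $m-1=2k$, $m-2=2k-1$, and $\theta(m-3,p)=\theta(2k-2,p)$) I would multiply the asserted recursion through by $[0,1]^{\theta(2k,p)}$ to clear the denominator of $F_{2k+1,p}$, making the left side $(-1)^k[0,2k+1]$, and then substitute the bracket forms of $F_{2k,p}$, $F_{2k-1,p}$ and $\nu^{\theta(2k-2,p)}$, collecting the total powers of $[0,1]$ and $[0,2]$ in each term. Using the three $\theta$-identities, the exponent of $[0,2]$ in the $\nu^{\theta}F_{2k,p}$ term collapses to $p^{2k-1}$ and the exponent of $[0,1]$ to $-p^{2k-1}$, while the $F_{2k-1,p}$ term acquires $[0,1]^{p^{2k}-p^{2k-1}}$. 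After this reduction the claim becomes exactly
\[
[0,2k+1]\,[0,1]^{p^{2k-1}} = [0,2k]\,[0,2]^{p^{2k-1}} - [0,2k-1]\,[0,1]^{p^{2k}},
\]
which is the Pl\"ucker relation (Lemma~\ref{squarebrackets}(4)) on the indices $0,2k-1,2k,2k+1$ once its cross terms are rewritten via Lemma~\ref{squarebrackets}(\ref{pth powers of brackets}) as $[2k,2k+1]=[0,1]^{p^{2k}}$, $[2k-1,2k+1]=[0,2]^{p^{2k-1}}$ and $[2k-1,2k]=[0,1]^{p^{2k-1}}$.

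The even case $m=2k$ is entirely parallel: I would multiply by $[0,1][0,2][1,2]^{-1}[0,1]^{\theta(2k-1,p)}$ to clear the denominator of $F_{2k,p}$, making the left side $(-1)^{k+1}[0,2k]$, then substitute $F_{2k-1,p}$, $F_{2k-2,p}$ and $\nu^{\theta(2k-3,p)}$ and reduce the exponents by the same $\theta$-identities. The identity then becomes
\[
[0,2k]\,[0,1]^{p^{2k-2}} = [0,2k-1]\,[0,2]^{p^{2k-2}} - [0,2k-2]\,[0,1]^{p^{2k-1}},
\]
which is the Pl\"ucker relation on $0,2k-2,2k-1,2k$, using $[2k-1,2k]=[0,1]^{p^{2k-1}}$, $[2k-2,2k]=[0,2]^{p^{2k-2}}$ and $[2k-2,2k-1]=[0,1]^{p^{2k-2}}$. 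In both cases the remaining step is to confirm the accumulated signs agree; for $p$ odd the factors $(-1)^{\theta(2k-2,p)}=-1$ and $(-1)^{\theta(2k-3,p)}=1$ supply exactly the sign needed to match $(-1)^k$ (respectively $(-1)^{k+1}$) on the two sides.

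The main obstacle is not any single step but the combined exponent-and-sign bookkeeping: one must verify that after clearing denominators every power of $[0,1]$ and $[0,2]$ lands on precisely the exponent appearing in the target Pl\"ucker relation. This is exactly where the closed form $(p+1)\theta(r,p)=p^{r+1}+(-1)^r$ (which turns the $(p+1)\theta(2k-2,p)$ power of $[0,2]$ into $p^{2k-1}+1$) and the telescoping relation $\theta(r,p)+\theta(r-1,p)=p^r$ do the essential work. Once those reductions are carried out, the proposition follows immediately from properties (\ref{pth powers of brackets}) and (4) of Lemma~\ref{squarebrackets}.
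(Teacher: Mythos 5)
Your proposal is correct, and the endgame is the same as the paper's: after clearing denominators everything reduces to the three-term Pl\"ucker relation of Lemma~\ref{squarebrackets}(4) on four consecutive-ish indices, with the cross terms rewritten via $[i,j]^p=[i+1,j+1]$. Where you genuinely diverge is in the treatment of $\nu^{\theta(r,p)}$: the paper devotes two separate lemmas, each proved by induction on $k$, to expressing $\nu^{\theta(2k-1,p)}$ and $\nu^{\theta(2k,p)}$ as ratios of high-index brackets times a power of $[0,1]$, and only then runs the substitution. You instead start from the closed form $\nu=-[0,2]^{p+1}/[0,1]^{p^2+1}$ and handle all exponents at once using $(p+1)\theta(r,p)=p^{r+1}+(-1)^r$ and the telescoping identity, which eliminates both inductive lemmas and makes the whole argument a single non-inductive verification for each $m$; I checked the resulting target identities (e.g.\ $[0,2k+1][0,1]^{p^{2k-1}}=[0,2k][0,2]^{p^{2k-1}}-[0,2k-1][0,1]^{p^{2k}}$) and the sign bookkeeping via the parity of $\theta(r,p)$, and they are right, including the characteristic-$2$ degeneration. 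The trade-off is that the paper's intermediate bracket expressions for $\nu^{\theta}$ are reusable elsewhere, whereas your route is shorter but buries the structure in exponent arithmetic; you do need the extra identity $(p^2+1)\theta(r,p)=\theta(r+2,p)+\theta(r,p)+(-1)^r(p-1)$ (or an equivalent), which follows from the two you list but should be written out. One small point to make explicit: for $m=3,4$ your substitution invokes $F_{1,p}=F_{2,p}=1$, and you should note that the displayed bracket formulas do specialize consistently there ($F_{1,p}=[0,1]\,[0,1]^{-1}=1$ and $F_{2,p}=[0,1]^p\,[0,1]^{-p}=1$), so no separate base case is needed.
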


The proof involves repeated use of the properties of Dickson's bracket polynomials. There is nothing very deep in the proof but since it is rather messy we banish it to the appendix.

We use Proposition~\ref{recursion} to prove the following.
\begin{prop}
Let $m \geq 2$ and suppose that $x,y\in \field^*$ are such that the span of $\{x,y\}$ is a plane not lying in the $\field^*$-orbit of $\field_{p^2}$.  Then
    $$
        F_{m,p}(x,y) = f_{m,p}(\nu(x,y))\ .
    $$
\end{prop}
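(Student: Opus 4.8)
The plan is to avoid any use of the still–unproved recursion of Theorem~\ref{properties of fmp}(\ref{recursive relation}), since that recursion is precisely what this development exists to establish and invoking it here would be circular. Instead I introduce an auxiliary family $g_{m,p}(X)\in\field[X]$ defined \emph{by} the recursion $g_{m,p}(X)=X^{\theta(m-3,p)}g_{m-1,p}(X)+g_{m-2,p}(X)$ with $g_{1,p}=g_{2,p}=1$, and then prove the two statements $F_{m,p}=g_{m,p}(\nu)$ and $g_{m,p}=f_{m,p}$ separately. For the first, an easy induction starting from $F_{1,p}=F_{2,p}=1=g_{1,p}=g_{2,p}$ and applying Proposition~\ref{recursion} at each step gives $F_{m,p}=g_{m,p}(\nu)$ as an identity of rational functions in $x,y$. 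Under the standing hypothesis every evaluation is legitimate: $[0,1]\ne 0$ because $\{x,y\}$ is independent, and $[0,2]\ne 0$ because the plane is not in the orbit of $\field_{p^2}$ (equivalently $\nu\ne 0$), so all denominators occurring in the $F_{m,p}$ are nonzero. It then remains only to prove the polynomial identity $g_{m,p}=f_{m,p}$.

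For that I would first locate enough roots of $g_{m,p}$. In both parities the bracket $[0,m]$ appears in the numerator of $F_{m,p}$ (as $[0,2k+1]$ when $m=2k+1$, and as $[0,2k]$ when $m=2k$). Since $[0,m](x,y)=x\,y^{p^m}-x^{p^m}y$ vanishes exactly when $x/y\in\field_{p^m}$, any plane $\calp\subseteq\field_{p^m}$ not in the orbit of $\field_{p^2}$ admits a basis $\{x,y\}$ with $x,y\in\field_{p^m}^*$, and then $F_{m,p}(x,y)=0$ with nonvanishing denominator, i.e. $g_{m,p}(\nu(\calp))=0$. As $\calp$ runs over these orbits, $\nu(\calp)$ runs over all of $\calz^\circ(m)$, so $g_{m,p}$ vanishes at each of the $\#\calz^\circ(m)$ distinct elements of $\calz^\circ(m)$. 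Consequently $f_{m,p}(X)=\prod_{z\in\calz^\circ(m)}(X-z)$ divides $g_{m,p}(X)$.

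To upgrade divisibility to equality I would compare degrees and leading coefficients. Because $\theta(m-3,p)>0$ and $\deg g_{m-1,p}\ge\deg g_{m-2,p}$, the top term of $g_{m,p}$ always comes from the summand $X^{\theta(m-3,p)}g_{m-1,p}$; hence $g_{m,p}$ is monic of degree $\sum_{r=0}^{m-3}\theta(r,p)$. Using $\theta(r,p)=\tfrac{p^{r+1}+(-1)^r}{p+1}$, a one-line summation shows this degree equals $\tfrac{p^{m-1}-1}{p^2-1}$ for $m$ odd and $\tfrac{p^{m-1}-p}{p^2-1}$ for $m$ even, which by Proposition~\ref{number_of_orbits} is exactly $\deg f_{m,p}=\#\calz^\circ(m)$. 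Thus $g_{m,p}$ and $f_{m,p}$ are monic of the same degree with $f_{m,p}\mid g_{m,p}$, forcing $g_{m,p}=f_{m,p}$ and therefore $F_{m,p}(x,y)=f_{m,p}(\nu(x,y))$.

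The main obstacle is the logical one just flagged: one cannot simply run the naive induction $F_{m,p}=\nu^{\theta(m-3,p)}F_{m-1,p}+F_{m-2,p}=\nu^{\theta(m-3,p)}f_{m-1,p}(\nu)+f_{m-2,p}(\nu)=f_{m,p}(\nu)$, because its last equality is the unproved $f$-recursion. Replacing that step by the root/degree comparison above is what makes the argument correct and, as a byproduct, yields Theorem~\ref{properties of fmp}(\ref{recursive relation}) itself, since the recursion-defined $g_{m,p}$ is now known to coincide with $f_{m,p}$. The only other point requiring care is the bookkeeping that guarantees the bracket denominators in $F_{m,p}$ vanish nowhere under the stated hypothesis, so that each evaluation of $F_{m,p}$ is genuinely valid.
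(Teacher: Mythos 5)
Your argument is correct and is essentially the paper's own proof: the paper likewise uses Proposition~\ref{recursion} to realize $F_{m,p}$ as a monic polynomial in $\nu$ of degree $\deg f_{m,p}$ (your $g_{m,p}$), observes that it vanishes on $\calz^\circ(m)$ because $[0,m]$ divides its numerator, and concludes equality with $f_{m,p}$ by matching roots, degree and leading coefficient. Your explicit naming of $g_{m,p}$ and the remark about avoiding circularity with the as-yet-unproved $f$-recursion only make the same logic more transparent.
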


\begin{proof}  It is a general fact from invariant theory that since $F_{m,p}$ is constant on the orbits of $\FF^*$, it must factor through
the invariant $\nu$.  We will not need to appeal to this fact here since
Proposition~\ref{recursion} shows explicitly that we can view $F_{m,p}$ as a polynomial in the variable $\nu$.
We study the properties of $F_{m,p}$ as a polynomial in $\nu$.

We have $F_{2,p}=1$ and $F_{3,p} = \nu + 1$ of degrees 0 and 1 respectively.  Using the recursive formula from Proposition~\ref{recursion} we can show that $\deg(F_{m,p}) = \ell(m,p)$ when $m$ is odd and $\deg(F_{m,p}) = \ell(m,p)-1$ when $m$ is even.  Similarly the recursive formula shows that $F_{m,p}$ is monic as a polynomial in $\nu$.  Finally  $F_{m,p}(x,y)$ vanishes on $\field_{p^m}\setminus \field_{p^2}$ since $F_{m,p}$ is divisible by  $\frac{[0,m]}{[0,1]}$ when $m$ is odd and is divisible by $\frac{[0,m]}{[0,2]}$ when $m$ is even.  Therefore $F_{m,p}(\nu)$ vanishes at all the values $\nu \in Z^\circ(m)$.

Thus as polynomials in $\nu$, both $F_{m,p}(\nu)$ and $f_{m,p}(\nu)$ are monic, share the same degree and have the same roots, and so $F_{m,p}(x,y) = f_{m,p}(\nu(x,y))$ as required.
 \end{proof}

Finally we can complete the proof of Theorem~\ref{properties of fmp} by proving property~(\ref{recursive relation}).  Suppose $m \geq 4$ and let $X=\nu(x,y)$.  Then
  \begin{align*}
        f_{m,p}(X) & = F_{m,p}(x,y) = \nu(x,y)^{\theta(m-3,p)} F_{m-1,p}(x,y) + F_{m-2,p}(x,y)\\
                        & =  \nu(x,y)^{\theta(m-3,p)} f_{m-1,p}(\nu(x,y)) + f_{m-2,p}(\nu(x,y))\\
                        & = X^{\theta(m-3,p)} f_{m-1,p}(X) + f_{m-2,p}(X).
  \end{align*}

\section{Representations of Integers}
In this section we give some simple properites of zigzag sequences in the spirit of Zeckendorf's Theorem.  Let
    $$
        \Phi(m) := \{(\varepsilon_{m-1},\varepsilon_{m-2},\dots,\varepsilon_1,\varepsilon_0) \mid
            (\varepsilon_{i},\varepsilon_{i-1}) \neq (1,1) \text{ for } i=1\dots m-1\}.
    $$
Thus $\Phi(m)$ is the set zero-one sequences of length m having no consecutive ones.  The following result is often referred to as Zeckendorf's Theorem \cite{Zeckendorf:72, ZeckendorfF:72, ZeckendorfE:72}, in the literature, although it was apparently first proved by Lekkerkerker, \cite{Lekkerkerker:52}.
\begin{thm}\label{Zeck rep}
Let $n$ be a positive integer.  Then there is a unique $m$ and a unique $\varepsilon=(\varepsilon_{m-1}, \varepsilon_{m-2}, \dots,\varepsilon_1,\varepsilon_0) \in \Phi(m)$ with $\varepsilon_{m-1}=1$ such that $\sum_{i=0}^{m-1} \varepsilon_i \cdot \Fib{i+2} = n$.
    \end{thm}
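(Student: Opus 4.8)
The plan is to prove existence and uniqueness simultaneously by strong induction on $n$, with the entire argument resting on a single bounding lemma about the weights $\Fib{i+2}$ that controls how large an admissible sum can be. The lemma both pins down the length $m$ from $n$ and forces the leading digit, so once it is in hand the induction is bookkeeping.

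First I would establish the key lemma: for every $\varepsilon \in \Phi(m)$,
    $$
        0 \le \sum_{i=0}^{m-1} \varepsilon_i \Fib{i+2} \le \Fib{m+2}-1 ,
    $$
and moreover, if the leading entry satisfies $\varepsilon_{m-1}=1$, then the sum lies in $[\Fib{m+1}, \Fib{m+2}-1]$. The upper bound is where the no-consecutive-ones condition does all the work, and I would prove it by induction on $m$. If $\varepsilon_{m-1}=0$, the tail lies in $\Phi(m-1)$ and the sum is at most $\Fib{m+1}-1 < \Fib{m+2}-1$. If $\varepsilon_{m-1}=1$, then necessarily $\varepsilon_{m-2}=0$, the remaining entries form a sequence in $\Phi(m-2)$ contributing at most $\Fib{m}-1$, and the total is at most $\Fib{m+1}+\Fib{m}-1 = \Fib{m+2}-1$ by the Fibonacci recursion. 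The lower bound when $\varepsilon_{m-1}=1$ is immediate, since the term $\Fib{(m-1)+2}=\Fib{m+1}$ is present and all terms are non-negative.

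The lemma shows that the half-open intervals $[\Fib{m+1}, \Fib{m+2})$, for $m \ge 1$, partition the positive integers (they are consecutive and their union is $[\Fib{2}, \infty) = [1,\infty)$), so a positive integer $n$ determines a \emph{unique} $m$ with $\Fib{m+1} \le n < \Fib{m+2}$. I would then run the strong induction. For existence, set $\varepsilon_{m-1}=1$ and apply the inductive hypothesis to $n' := n - \Fib{m+1}$; since $0 \le n' < \Fib{m} = \Fib{(m-2)+2}$, its representation uses no weight as large as $\Fib{m}$, hence has leading index at most $m-2$, so placing it after the forced $\varepsilon_{m-2}=0$ keeps the whole sequence in $\Phi(m)$ with no new consecutive ones. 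For uniqueness, any admissible representation of $n$ with leading entry $\varepsilon_{k-1}=1$ has value in $[\Fib{k+1}, \Fib{k+2})$ by the lemma, forcing $k=m$; thus the top entry is $\varepsilon_{m-1}=1$ in every representation, and subtracting $\Fib{m+1}$ reduces to the unique representation of $n'$ supplied by the induction.

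The main obstacle is the upper half of the bounding lemma: everything else is a consequence of it, but it is exactly the inequality $\sum \varepsilon_i \Fib{i+2} < \Fib{m+2}$ for non-consecutive $\varepsilon$ that simultaneously determines $m$ and forces the leading bit, and it is the only point at which the Fibonacci recursion and the $\Phi$-condition are genuinely used. I would also remark that the same bounds yield a purely enumerative proof: the number of $\varepsilon \in \Phi(m)$ with $\varepsilon_{m-1}=1$ equals $\#\Phi(m-2) = \Fib{m}$, which is precisely the number of integers in $[\Fib{m+1}, \Fib{m+2})$, so the injective weight map must be a bijection between these two finite sets, and taking the union over all $m \ge 1$ gives the theorem.
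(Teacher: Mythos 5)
The paper does not prove Theorem~\ref{Zeck rep} at all: it is quoted as a classical result with citations to Zeckendorf and Lekkerkerker, so there is no in-paper argument to compare yours against. Your proof is correct and is essentially the standard one. The bounding lemma $\sum_{i=0}^{m-1}\varepsilon_i\Fib{i+2}\le\Fib{m+2}-1$ for $\varepsilon\in\Phi(m)$, proved by splitting on the leading digit and invoking the Fibonacci recursion, is exactly the right engine: it makes the intervals $[\Fib{m+1},\Fib{m+2})$, $m\ge 1$, partition the positive integers, which pins down $m$ and forces the leading digit simultaneously, and the strong induction then delivers existence and uniqueness together. Two minor points. First, in the existence step you say the representation of $n'=n-\Fib{m+1}$ ``has leading index at most $m-2$''; since $n'<\Fib{m}=\Fib{(m-2)+2}$ and $\Fib{m}$ is the weight attached to position $m-2$, the leading index is in fact at most $m-3$, which is what you actually need for the forced $\varepsilon_{m-2}=0$ to be consistent --- the inequality you wrote already gives this, so it is only a slip of wording; you should also dispose of $n'=0$ separately, since the theorem as stated applies only to positive integers. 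Second, the closing enumerative remark presupposes injectivity of the weight map, i.e.\ the uniqueness half, so it trades one half of the theorem for the other by counting rather than giving an independent proof; as a remark that is fine, but it should not be advertised as ``purely enumerative.'' Finally, it is worth observing that your interval-decomposition technique is precisely the one the paper itself deploys for the analogous statement about zigzag sequences (Proposition~\ref{DUodd}), where $||\cdot||_{\Fb}$ is shown to biject $\DU{n}$ with $[0,\Fib{n+2})$ by splitting on the two leading entries, so your argument fits naturally alongside the paper's own methods.
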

The zero-one string $\varepsilon$ guaranteed by Theorem~\ref{Zeck rep} is called the Zeckendorf representation of $n$.  Here we give similar results for zigzag sequences.

If the recursive definition of the Fibonacci numbers is extended to negative indices we get $\Fib{-1}=-1$, $\Fib{-2}=1$, $\Fib{-3}=-2, \dots$.  In general $\Fib{m} = (-1)^{m-1}\Fib{-m}$ if $m < 0$.  Then $\Fib{n}=\Fib{n-1}+\Fib{n-2}$ for all integers $n$.  These numbers , so defined, are sometimes referred to as the signed Fibonacci numbers.

Let $V=(v_{n-1},v_{n-2},\dots,v_0)$ be an integer sequence of length $n$.  We define
     \begin{align*}
       ||V||_{\Fb} &:= \sum_{i=0}^{n-1} v_i \cdot \Fib{i+1} \qquad\text{ and}\\
       ||V||_{\sFb} &:= \sum_{i=0}^{n-1} v_i \cdot \Fib{-i-2}\ .
     \end{align*}

\begin{defn}
Let $V=(v_{n-1},v_{n-2},\dots,v_0)$.  We say that $V$ is a \emph{Fibonacci representation} of $m$ if $||V||_\Fb = m$.  We say that $V$ is a \emph{signed Fibonacci representation} of $m$ if $||V||_\sFb = m$.
\end{defn}
\begin{exa}
We have
    \begin{align*}
      64 &= \Fib{10}+ \Fib{6} + \Fib{2} =55+8+1\\
      12 &= \Fib{-2}+ \Fib{-7} =-1+13 \\
      -43 &= \Fib{-2}+ \Fib{-7} + \Fib{-10}=-1+13-55 \\
    \end{align*}
\end{exa}

For each $n$ there is a natural inclusion $\iota_n : \DU{n} \hookrightarrow \DU{n+2}$ given by $\iota_n(\epsilon) = (0,0,\epsilon)$.   Clearly $||\epsilon||_\Fb = ||\iota_n(\epsilon)||_\Fb$.  We identify $\DU{n}$ with the subset $\iota_n(\DU{n})$ of $\DU{n+2}$ and define
    \begin{align*}
          \DU{\text{even}} &:= \bigcup_{i=0}^\infty \DU{2i} \quad \text{and}\\
          \DU{\text{odd}} &:= \bigcup_{i=0}^\infty \DU{2i+1} \ .
    \end{align*}

\begin{prop}\label{DUodd}
  Let $n$ be a non-negative integer.  Then there is a unique down/up zero-one sequence of odd length $\varepsilon \in \DU{\text{odd}}$ such that $||\varepsilon||_\Fb = n$.  Furthermore there is a unique down/up zero-one sequence of even length $\varepsilon \in \DU{\text{even}}$ such that $||\varepsilon||_\Fb = n$.
\end{prop}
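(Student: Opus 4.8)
The plan is to deduce the proposition from a single sharp statement: \emph{for every $n\ge 0$ the map $\varepsilon\mapsto ||\varepsilon||_\Fb$ restricts to a bijection}
$$
    \DU{n}\;\longrightarrow\;\{0,1,\dots,\Fib{n+2}-1\}.
$$
Granting this, the proposition follows quickly. Since each weight $\Fib{i+1}$ with $i\ge 0$ is positive, $||\varepsilon||_\Fb\ge 0$ for every zero--one sequence, so all norms land in $\bbn$. Given a target $n\ge 0$, I would choose an odd length $N$ with $\Fib{N+2}>n$; the bijection for $\DU{N}$ produces a unique $\varepsilon\in\DU{N}\subset\DU{\text{odd}}$ with $||\varepsilon||_\Fb=n$, giving existence. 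For uniqueness, if $\varepsilon\in\DU{N}$ and $\varepsilon'\in\DU{N'}$ both have norm $n$ with $N\le N'$ (both odd), then the norm-preserving inclusion $\iota\colon\DU{N}\hookrightarrow\DU{N'}$ sends $\varepsilon$ to an element of $\DU{N'}$ of norm $n$; injectivity of the bijection for $\DU{N'}$ forces $\iota(\varepsilon)=\varepsilon'$, i.e.\ $\varepsilon$ and $\varepsilon'$ agree in $\DU{\text{odd}}$. The even case is identical with $N$ even, and since $\Fib{N+2}\to\infty$ the nested intervals exhaust $\bbn$.

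To prove the bijection I would induct on $n$, but it is necessary to carry along the companion family of \emph{up--down} sequences $\UD{n}$, namely the zero--one sequences with the reversed pattern $\varepsilon_{n-1}\le\varepsilon_{n-2}\ge\varepsilon_{n-3}\le\cdots$. The statement to prove jointly is that both $||\cdot||_\Fb\colon\DU{n}\to\{0,\dots,\Fib{n+2}-1\}$ and $||\cdot||_\Fb\colon\UD{n}\to\{0,\dots,\Fib{n+2}-1\}$ are bijections. The base cases $n=0,1$ are immediate: for $n=1$ both $\DU{1}$ and $\UD{1}$ equal $\{(0),(1)\}$ with norms $\{0,1\}=\{0,\dots,\Fib{3}-1\}$, while $n=0$ gives the single empty sequence of norm $0$.

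For the inductive step with $n\ge 2$ the key idea is to split on the leading entry $\varepsilon_{n-1}$, exploiting that deleting leading entries turns a down--up sequence into an up--down one and vice versa. For $\varepsilon\in\DU{n}$: if $\varepsilon_{n-1}=0$ then $\varepsilon_{n-2}=0$ is forced, and deleting these two leading zeros gives an arbitrary element of $\DU{n-2}$ with the \emph{same} norm, ranging by induction bijectively over $\{0,\dots,\Fib{n}-1\}$; if $\varepsilon_{n-1}=1$, deleting the single leading one gives an arbitrary element of $\UD{n-1}$ whose norm is exactly $||\varepsilon||_\Fb-\Fib{n}$, so these $\varepsilon$ range bijectively over $\Fib{n}+\{0,\dots,\Fib{n+1}-1\}=\{\Fib{n},\dots,\Fib{n+2}-1\}$. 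The two output intervals are disjoint and consecutive, hence tile $\{0,\dots,\Fib{n+2}-1\}$. The case $\UD{n}$ is symmetric: if $\varepsilon_{n-1}=0$ delete it to land in $\DU{n-1}$ with the same norm, covering $\{0,\dots,\Fib{n+1}-1\}$; if $\varepsilon_{n-1}=1$ then $\varepsilon_{n-2}=1$ is forced, and deleting both leading ones lands in $\UD{n-2}$ while lowering the norm by $\Fib{n}+\Fib{n-1}=\Fib{n+1}$, covering $\{\Fib{n+1},\dots,\Fib{n+2}-1\}$; once more the intervals abut.

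The only genuinely delicate point is the parity bookkeeping: because stripping a single leading $1$ reverses the zigzag direction, the down--up and up--down statements must be proved in tandem, and one must check at each step that the two subranges meet exactly end to end. This is where the Fibonacci recursion $\Fib{n-1}+\Fib{n}=\Fib{n+1}$ enters, and the companion identity $\sum_{i=1}^{n}\Fib{i}=\Fib{n+2}-1$ gives a consistency check (the all--ones sequence attains the top value $\Fib{n+2}-1$). Everything else is routine: the forcing of $\varepsilon_{n-2}$ from $\varepsilon_{n-1}$, the invariance of $||\cdot||_\Fb$ under deleting leading zeros, and the compatibility of the inclusions $\iota$ with the norm. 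The count $\#\DU{n}=\Fib{n+2}$ already established above matches the cardinality of the target interval, which is a reassuring cross-check on the argument.
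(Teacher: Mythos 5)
Your proof is correct, and it rests on the same key claim as the paper's proof --- that $||\cdot||_\Fb$ restricts to a bijection $\DU{n}\to[0,\Fib{n+2})$ for every $n$ --- but the inductive step is organized differently. The paper splits $\DU{n}$ according to the first \emph{two} entries into $\DU{n}^{00}\sqcup\DU{n}^{10}\sqcup\DU{n}^{11}$; each piece is identified with (a subset of) $\DU{n-2}$, the three images are shown to lie in the disjoint intervals $[0,\Fib{n})$, $[\Fib{n},2\Fib{n})$ and $[2\Fib{n},\Fib{n+2})$, and since the identification of $\DU{n}^{11}$ with a subset of $\DU{n-2}$ is only a proper inclusion, the paper must finish by comparing cardinalities via $\#\DU{n}=\Fib{n+2}$. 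You instead split on the single leading entry, which forces you to carry the companion family $\UD{n}$ of up/down sequences through a simultaneous induction (stripping one leading $1$ reverses the zigzag direction), but in exchange each of your two pieces maps bijectively onto its target interval, so surjectivity falls out of the induction directly with no counting step. A pleasant side effect is that your argument establishes the up/down analogue (the paper's Proposition~\ref{UDodd}, whose proof is omitted there) at the same time, and re-derives $\#\DU{n}=\Fib{n+2}$ rather than consuming it. Your passage from the bijection to the proposition itself (choose an odd $N$ with $\Fib{N+2}>n$ for existence; use the norm-preserving inclusions $\iota$ together with injectivity for uniqueness) spells out what the paper leaves implicit.
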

\begin{proof}
We will show that $||\cdot||_\Fb$ gives a bijection for all $n$ between $\DU{n}$ and the interval of integers $[0,\Fib{n+2}-1]=[0,\Fib{n+2})$.  We proceed by induction treating the even and odd cases in parallel.  For $n=0$ we have $\DU{1} = \{(0)\}$ and $\{||\varepsilon||_\Fb \mid \varepsilon \in \DU{0}\} = \{0\} = [0, \Fib{2})$.  For $n=1$ we have $\DU{1} = \{(0),(1)\}$ and $\{||\varepsilon||_\Fb \mid \varepsilon \in \DU{1}\} = \{0,1\} = [0, \Fib{3})$.

Suppose then by induction that there is such a bjection for $n-2$.  Clearly $\DU{n}$ decomposes as a disjoint union:
    $$
        \DU{n} = \DU{n}^{00} \sqcup \DU{n}^{10} \sqcup \DU{n}^{11}
    $$
where $\DU{n}^{ij} = \{\epsilon \in \DU{n} \mid \epsilon_{n-1}=i, \epsilon_{n-2}=j\}$.  Clearly $\DU{n}^{00} = \iota(\DU{n-2}) = \DU{n-2}$ under our identification.  Also $\DU{n}^{10} = \{ (1,0,\epsilon) \mid \epsilon \in \DU{n-2}\}$ and  $\DU{n}^{11} \subsetneq \{ (1,1,\epsilon) \mid \epsilon \in \DU{n-2}\}$.

Thus by the induction hypothesis $\{||\epsilon||_\Fb \mid \epsilon \in \DU{n}^{00}\} = [0,\Fib{n})$ and $\{||\epsilon||_\Fb \mid \epsilon \in \DU{n}^{10}\} = \Fib{n} + [0,\Fib{n}) = [\Fib{n},2\Fib{n})$.  Clearly $||\cdot||_\Fb$ is injective when restricted to  $\DU{n}^{11}$.  We consider the image of this restriction.  This is $\{||\cdot||_\Fb \mid \epsilon \in \DU{n}^{11}\}$ which is contained in $\Fib{n}+\Fib{n-1} + [0,\Fib{n}) = [\Fib{n+1},\Fib{n+2})$.
Note that if $\epsilon\in\DU{n}^{11}$ then $\epsilon_{n-3}=1$ and thus $|||\epsilon||_\Fb \geq \Fib{n}+\Fib{n-1}+\Fib{n-2} = 2\Fib{n}$ for all $\epsilon \in \DU{n}^{11}$.

Therefore we have three injective maps
    \begin{align*}
        ||\cdot||_\Fb &: \DU{n}^{00} \to [0,\Fib{n})\\
        ||\cdot||_\Fb &: \DU{n}^{10} \to [\Fib{n},2\Fib{n})\\
        ||\cdot||_\Fb &: \DU{n}^{11} \to [2\Fib{n},\Fib{n+2})\\
        &\text{which combine to give the injection}\\
        ||\cdot||_\Fb &: \DU{n} \to [0,\Fib{n+2})
    \end{align*}
    Finally $\#\DU{n} = \Fib{n+2} = \# [0,\Fib{n+2})$ and so this map is a bijection.
\end{proof}

The proofs of the following three propositions in this section are quite similar to the proof of Proposition~\ref{DUodd} and so we omit them.

\begin{prop}\label{DUboth}
Let $n$ be any integer.  Then there is a unique down/up zero-one sequence $\varepsilon \in \DU{\text{odd}} \cup \DU{\text{even}}$ such that $||\varepsilon||_\sFb = n$.
\end{prop}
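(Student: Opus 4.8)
The plan is to follow the inductive scheme of \propref{DUodd} verbatim in its combinatorial skeleton; the only new ingredient is that the alternating signs of the signed Fibonacci weights
$$
    \Fib{-i-2} = (-1)^i\,\Fib{i+2}
$$
force the odd-length and even-length families onto opposite sides of $0$. First I would prove, by a single induction run in parallel over both parities, the sharpened statement that $||\cdot||_\sFb$ restricts to bijections
$$
    \DU{2k+1} \xrightarrow{\ \sim\ } [0,\Fib{2k+3}) \qquad\text{and}\qquad \DU{2k} \xrightarrow{\ \sim\ } (-\Fib{2k+2},0]\ ,
$$
so that odd-length zigzags realize exactly the non-negative integers below $\Fib{2k+3}$ and even-length zigzags exactly the non-positive integers down to $-\Fib{2k+2}+1$. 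The base cases $\DU{0}\to\{0\}=(-\Fib{2},0]$ and $\DU{1}\to\{0,1\}=[0,\Fib{3})$ are immediate, as is the observation (needed throughout) that the inclusion $\iota$ preserves $||\cdot||_\sFb$, since the appended leading digits are zero.

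For the inductive step I would reuse the decomposition $\DU{n} = \DU{n}^{00} \sqcup \DU{n}^{10} \sqcup \DU{n}^{11}$ by the two leading digits. Because $\iota$ is value-preserving, $\DU{n}^{00} = \iota(\DU{n-2})$ contributes precisely the value-set of $\DU{n-2}$; the block $\DU{n}^{10}$ shifts that set by the weight $\Fib{-(n-1)-2}=(-1)^{n-1}\Fib{n+1}$ of its leading digit; and $\DU{n}^{11}$ shifts by the sum of the weights of its two leading digits, $(-1)^{n-1}\Fib{n+1}+(-1)^{n}\Fib{n}=(-1)^{n-1}\Fib{n-1}$. Thus for odd $n$ the two shifts are the positive numbers $\Fib{n+1}$ and $\Fib{n-1}$, and for even $n$ they are their negatives, which is exactly what sends the three blocks into $[0,\Fib{n+2})$ and into $(-\Fib{n+2},0]$ respectively. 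The main obstacle, just as in \propref{DUodd}, is the middle block: the zigzag condition forces $\varepsilon_{n-3}=1$ for every $\varepsilon\in\DU{n}^{11}$, so this block is a proper subset of $\{(1,1,\varepsilon)\}$ of cardinality $\Fib{n+2}-2\Fib{n}=\Fib{n-1}$, and one must check that after the shift $(-1)^{n-1}\Fib{n-1}$ its image fills exactly the gap of length $\Fib{n-1}$ left between the images of $\DU{n}^{00}$ and $\DU{n}^{10}$, with no gaps or overlaps. This is where the sign bookkeeping of the signed weights has to be carried out carefully; everything else is a direct transcription of the unsigned argument.

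Finally I would pass to the direct limit. Since the inclusions $\DU{n}\hookrightarrow\DU{n+2}$ are injective and value-preserving and the target intervals are nested, the two families assemble into bijections $||\cdot||_\sFb:\DU{\text{odd}}\to\bbz_{\ge0}$ and $||\cdot||_\sFb:\DU{\text{even}}\to\bbz_{\le0}$. These images overlap only in $\{0\}$, attained solely by the all-zero (empty) sequence, which is the common base of both direct limits. Consequently a positive integer has a unique preimage, necessarily of odd length since no even-length zigzag exceeds $0$; a negative integer has a unique preimage, necessarily of even length; and $0$ is represented only by the all-zero sequence. This yields the asserted bijection $||\cdot||_\sFb:\DU{\text{odd}}\cup\DU{\text{even}}\to\bbz$ and completes the proof.
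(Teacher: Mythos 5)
Your proposal is correct and takes essentially the approach the paper intends: the paper omits the proof of this proposition, saying only that it is ``quite similar to the proof of Proposition~\ref{DUodd},'' and your argument is precisely that adaptation --- the same three-block decomposition $\DU{n}=\DU{n}^{00}\sqcup\DU{n}^{10}\sqcup\DU{n}^{11}$, injectivity on each block, disjoint target ranges, and a cardinality count, with the alternating weights $\Fib{-i-2}$ routing the odd- and even-length families onto the two sign classes of integers and overlapping only at $0$. The one step you defer (that the shifted $\DU{n}^{11}$ fills the middle gap exactly) closes cleanly by observing that the inductive bijection carries $\{\varepsilon\in\DU{n-2}\mid \varepsilon_{n-3}=1\}$ onto the complement of the image of $\DU{n-2}^{00}$, i.e.\ onto $\pm[\Fib{n-2},\Fib{n})$; note also that which parity covers which sign depends only on the sign convention for $\Fib{-k}$, on which the paper's displayed values and its worked example disagree, but the statement is symmetric under negation so the argument is unaffected.
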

Note that in this proposition we identify the all zeros string of even and odd length so that there is a unique signed Fibonacci representation of the integer 0.

We may define up/down zero-one sequences in the obvious imitation of down/up sequences: $\epsilon = (\varepsilon_{m-1},\varepsilon_{m-2},\dots,\varepsilon_1,\varepsilon_0)$ is up/down if $\epsilon_{m-1} \leq \epsilon_{n-2} \geq \epsilon_{n-3} \leq \dots \epsilon_{0}$.  We define $\overline{\Omega}(m)$ to be the set of all up/down zero-one sequences of length $m$.  There is a family of inclusions $\overline{\iota}_m : \overline{\Omega}(m) \hookrightarrow \overline{\Omega}(m+2)$ satisfying $||\epsilon||_\Fb = ||\overline{\iota}(\epsilon)||_\Fb$.  Note that $\overline{\iota}_m$ is more complicated to define than $\iota_m$.  Using these inclusions we define $\overline{\Omega}(\text{odd})$ and $\overline{\Omega}(\text{even})$.  Then we have

\begin{prop}\label{UDodd}
Let $n$ be a non-negative integer.  Then there is a unique up/down zero-one sequence of odd length $\varepsilon \in \overline{\Omega}({\text{odd}})$ such that $||\varepsilon||_\Fb = n$.  Furthermore there is a unique up/down zero-one sequence of even length $\varepsilon \in \overline{\Omega}({\text{even}})$ such that $||\varepsilon||_\Fb = n$.
\end{prop}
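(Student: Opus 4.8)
The plan is to mirror the proof of \propref{DUodd}: I would first establish that $||\cdot||_\Fb$ restricts to a bijection $\UD{n} \to [0,\Fib{n+2})$ for every $n \ge 0$, and then pass to the direct limits $\UD{\text{odd}}$ and $\UD{\text{even}}$ using the inclusions $\overline{\iota}_m$. Exactly as in that proof, the per-length bijections together with the count $\#\UD{n}=\Fib{n+2}$ carry all the content, and uniqueness and existence in the limit follow formally once the per-length statement is in hand. I would argue by induction on $n$ in steps of two, treating even and odd lengths in parallel, with base cases $n=0,1$ (the degenerate tails occurring for $n=2,3$ being checked directly).

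For the inductive step I would split $\UD{n}$ according to its two leftmost entries $(\varepsilon_{n-1},\varepsilon_{n-2})$. Since an up/down sequence satisfies $\varepsilon_{n-1}\le\varepsilon_{n-2}$, the only admissible pairs are $(0,0)$, $(0,1)$ and $(1,1)$, so
$$
\UD{n} = \UD{n}^{00} \sqcup \UD{n}^{01} \sqcup \UD{n}^{11}.
$$
The two leftmost entries occupy positions $n-1$ and $n-2$, of weights $\Fib{n}$ and $\Fib{n-1}$. In the pairs $(0,1)$ and $(1,1)$ the relation $\varepsilon_{n-2}\ge\varepsilon_{n-3}$ imposes no constraint, so the tail ranges over all of $\UD{n-2}$, which by the inductive hypothesis contributes $[0,\Fib{n})$ injectively; adding the prefix weights $\Fib{n-1}$ and $\Fib{n}+\Fib{n-1}=\Fib{n+1}$ gives the images $[\Fib{n-1},\Fib{n+1})$ and $[\Fib{n+1},\Fib{n+2})$ respectively. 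Together with the $(0,0)$ block computed below, these three intervals tile $[0,\Fib{n+2})$ disjointly, so $||\cdot||_\Fb$ maps $\UD{n}$ bijectively onto $[0,\Fib{n+2})$; in particular $\#\UD{n}=\Fib{n+2}$.

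The step I expect to be the main obstacle is precisely the block $(0,0)$, which does \emph{not} reduce to an up/down sequence of the same parity. Here $\varepsilon_{n-2}=0$ forces $\varepsilon_{n-3}=0$ through $\varepsilon_{n-2}\ge\varepsilon_{n-3}$, so the tail is an up/down sequence of length $n-2$ whose leading entry is $0$; deleting that leading $0$ (which preserves $||\cdot||_\Fb$) identifies these sequences with the \emph{down/up} sequences of length $n-3$. I would therefore invoke \propref{DUodd}, whose bijection gives image $[0,\Fib{n-1})$ for this block, so $(0,0)$ is exactly the place where the up/down and down/up pictures become intertwined. Finally, to pass from the per-length statement to \propref{UDodd} I would use the inclusions $\overline{\iota}_m$: although $\overline{\iota}_m$ is more delicate to write down than $\iota_m$, it preserves $||\cdot||_\Fb$, and since the level-$m$ and level-$(m+2)$ maps are bijections onto initial intervals, $\overline{\iota}_m$ automatically sends the representative of each $N<\Fib{m+2}$ to the representative of $N$ at the next level. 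Hence in each direct limit $\UD{\text{odd}}$ and $\UD{\text{even}}$ every non-negative integer has a unique up/down representative, as claimed.
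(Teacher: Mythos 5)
Your argument is correct and is precisely the ``similar to \propref{DUodd}'' proof that the paper omits: you split $\UD{n}$ by its two leftmost entries into the blocks $(0,0)$, $(0,1)$, $(1,1)$ and show that $||\cdot||_\Fb$ carries them bijectively onto the tiling $[0,\Fib{n-1})\sqcup[\Fib{n-1},\Fib{n+1})\sqcup[\Fib{n+1},\Fib{n+2})$ of $[0,\Fib{n+2})$, then pass to the direct limit via $\overline{\iota}_m$. You also correctly isolate and resolve the one genuinely new point relative to the down/up case, namely that the $(0,0)$ block does not reduce to $\UD{n-2}$ but, after the forced $\varepsilon_{n-3}=0$ and deletion of that leading zero, to $\DU{n-3}$, so that \propref{DUodd} itself must be invoked to give that block the image $[0,\Fib{n-1})$.
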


Moreover we have
\begin{prop}\label{UDboth}
Let $n$ be any integer.  Then there is a unique up/down zero-one sequence of odd length $\varepsilon \in \UD{\text{odd}}$ such that $||\varepsilon||_\sFb = n$. Furthermore there is a unique up/down zero-one sequence of even length $\varepsilon \in \UD{\text{even}}$ such that $||\varepsilon||_\sFb = n$.
\end{prop}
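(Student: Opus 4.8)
The plan is to imitate the proof of Proposition~\ref{DUodd}, replacing $||\cdot||_\Fb$ by $||\cdot||_\sFb$ and down/up sequences by up/down sequences, and to track the resulting image as an interval of consecutive integers that now straddles $0$ (since the coefficients $\Fib{-i-2}$ alternate in sign).

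First I would prove the key lemma: for every $n\ge 1$ the map $||\cdot||_\sFb$ is a bijection from $\UD{n}$ onto the set of $\Fib{n+2}$ consecutive integers
$$ I_n=\begin{cases} [\,-\Fib{n},\ \Fib{n+1}-1\,], & n\text{ odd},\\ [\,1-\Fib{n+1},\ \Fib{n}\,], & n\text{ even}. \end{cases} $$
The endpoints are read off from the two extreme sequences: the all-ones sequence and the maximal (peaks-one) zigzag; one checks directly that $\#I_n=\Fib{n}+\Fib{n+1}=\Fib{n+2}=\#\UD{n}$. The induction is on $n$, carried out in parallel for the two parities, with trivial base cases $n=1,2$. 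For the inductive step I split $\UD{n}$ according to its two leading entries. The up/down condition forbids the pair $(1,0)$ at the top, so $\UD{n}=\UD{n}^{00}\sqcup\UD{n}^{01}\sqcup\UD{n}^{11}$, and writing $\varepsilon=(\varepsilon_{n-1},\varepsilon_{n-2},\eta)$ one has $||\varepsilon||_\sFb=\varepsilon_{n-1}\Fib{-n-1}+\varepsilon_{n-2}\Fib{-n}+||\eta||_\sFb$, so each block is a translate of the image of its tail $\eta$ by a controlled amount.

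The two ``free'' blocks are easy: in the cases $(0,1)$ and $(1,1)$ the entry $\varepsilon_{n-2}=1$ leaves $\varepsilon_{n-3}$ unconstrained, so $\eta$ ranges over all of $\UD{n-2}$ and contributes a full translate of $I_{n-2}$, shifted by $\Fib{-n}$ and by $\Fib{-n-1}+\Fib{-n}=\Fib{-(n-1)}$ respectively. The remaining block $(0,0)$ is the analogue of the constrained block in Proposition~\ref{DUodd}: here $\varepsilon_{n-2}=0$ forces $\varepsilon_{n-3}=0$, so $\eta$ runs only over the sub-family of $\UD{n-2}$ beginning with $0$, and its image is the corresponding sub-interval. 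The step that needs genuine care, and where the argument diverges from the unsigned case, is the bookkeeping of signs: since $\Fib{-n-1}$ is negative for $n$ odd and positive for $n$ even, the three blocks appear along $I_n$ in the order $11,00,01$ when $n$ is odd and in the reverse order $01,00,11$ when $n$ is even. Verifying that in both parities the shifts line the blocks up into a single gapless interval with exactly the endpoints above is the \emph{main obstacle}.

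Finally I would pass to the direct limit. For each fixed parity the intervals $I_n$ are nested, $I_n\subset I_{n+2}$, and since $\Fib{n}\to\infty$ their union is all of $\bbz$; thus every integer lies in $I_N$ for all large $N$ of the given parity, with a unique length-$N$ representative by the lemma. To conclude uniqueness in $\UD{\text{odd}}$ and $\UD{\text{even}}$ I need the defining inclusions $\overline{\iota}$ to preserve $||\cdot||_\sFb$, so that the unique length-$N$ representative of an integer is carried to the unique length-$(N+2)$ one. For sequences whose top entry is $0$ this is immediate, since $\overline{\iota}$ merely prepends $(0,0)$ and thereby adds $0$ to both $||\cdot||_\Fb$ and $||\cdot||_\sFb$; the only delicate case, as already signalled in the text, is a sequence with leading $1$, where $\overline{\iota}$ must rewrite lower entries via a Fibonacci identity. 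I would check that this rewriting, which preserves $||\cdot||_\Fb$, likewise preserves $||\cdot||_\sFb$ (the signed Fibonacci numbers obey the same three-term recursion), so that $||\cdot||_\sFb$ is well defined on the limit and the asserted existence and uniqueness hold for every integer $n$.
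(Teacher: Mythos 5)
Your key lemma --- that $||\cdot||_\sFb$ maps $\UD{n}$ bijectively onto a run of $\Fib{n+2}$ consecutive integers straddling $0$, with nested images as $n$ grows within a parity --- is correct and is exactly the adaptation of Proposition~\ref{DUodd} that the paper intends (it omits the proof for precisely that reason). The block decomposition $\UD{n}=\UD{n}^{00}\sqcup\UD{n}^{01}\sqcup\UD{n}^{11}$, the shifts $\Fib{-n}$ and $\Fib{-(n-1)}$, and the interval endpoints all check out in small cases (e.g.\ $\UD{4}\to[-4,3]$ with block images $[-4,-2]$, $[-1,0]$, $[1,3]$). One point you should make explicit: unlike in Proposition~\ref{DUodd}, the two unconstrained translates of $I_{n-2}$ are \emph{not} automatically disjoint from $I_{n-2}$ itself (for $n=4$, $\Fib{-3}+I_2=[1,3]$ meets $I_2=[-1,1]$), so the cardinality count alone does not localize the $00$-block; you must carry the image of the leading-zero subfamily of $\UD{m}$ (equivalently, of $\DU{m-1}$ under $||\cdot||_\sFb$, namely $[0,\Fib{m+1}-1]$ or $[1-\Fib{m+1},0]$ according to parity) as part of the induction hypothesis. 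You flagged this bookkeeping as the main obstacle, and it is doable.

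The genuine gap is in your final paragraph. The check you defer --- that $\overline{\iota}_m$, which preserves $||\cdot||_\Fb$, also preserves $||\cdot||_\sFb$ --- fails. Take $(1)\in\UD{1}$: since $||(1)||_\Fb=\Fib{1}=1$ and the unique element of $\UD{3}$ with $\Fb$-value $1$ is $(0,1,0)$, necessarily $\overline{\iota}_1\big((1)\big)=(0,1,0)$; but $||(1)||_\sFb=\Fib{-2}=-1$ while $||(0,1,0)||_\sFb=\Fib{-3}=2$ (the inequality persists under either sign convention for the negatively indexed Fibonacci numbers). The ``same three-term recursion'' heuristic does not apply because on leading-one sequences $\overline{\iota}_m$ is not a prepend-zeros map: it rewrites entries via $\Fib{m}+\Fib{m-1}=\Fib{m+1}$, and that identity does not transfer to the alternating-sign weights $\Fib{-i-2}$. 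The repair is to abandon $\overline{\iota}$ for this proposition and use instead the identification your own lemma provides: for each parity, the unique $||\cdot||_\sFb$-preserving injection $\UD{m}\hookrightarrow\UD{m+2}$, which exists because $I_m\subset I_{m+2}$ and both maps are bijections onto intervals. With that identification the direct-limit argument goes through verbatim. (To be fair, the same ambiguity is latent in the paper's own statement, since $\UD{\text{odd}}$ and $\UD{\text{even}}$ are there constructed from the $\Fb$-preserving inclusions.)
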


\begin{rem}
We may consider the elements of $\DU{m}$ as lattice points in $\RR^m$.  Then their convex hull is a polytope contained in the unit $m$-cube.  We would have christened these {\em ZZ-topes} but they have already been studied by Richard Stanley, \cite{Stanley-Alternating:10}, as the   \emph{zigzag order polytopes.}  Stanley showed that this polytope has a number of  special properties.  The elements of $\DU{m}$ are the vertices of this polytope and it has no interior lattice points.  Its normalized volume is  $A_m$
the $m^{\rm th}$ zigzag number, also known as an Euler number.
 \end{rem}

\section{At the prime 1: Morgan-Voyce Polynomials}
In this section we consider the polynomials $f_{m,1}(X)$.  We view these as elements of $\bbz[X]$.  We can take either the recursive relation of Theorem~\ref{properties of fmp}(\ref{recursive relation}) or the  summation formula of Theorem~\ref{thm:main} for the formal definition of $f_{m,1}(X) \in \bbz[X]$.
%
%
Since ${\theta(2k,1)}=1$ and ${\theta(2k+1,1)}=0$ we have
    $$
        f_{m,1}(X) =
            \begin{cases}
                                   0,& \mbox{if } m =0;\\
                                   1,& \mbox{if } m =1;\\
                f_{m-1,1}(X) +  f_{m-2,1}(X),& \mbox{if } m\geq3 \mbox{ is odd};\\
                X f_{m-1,1}(X) +  f_{m-2,1}(X),& \mbox{if } m\geq 2 \mbox{ is even}.
            \end{cases}
    $$
We may separate the even and cases as follows: for $m$ even these are defined by the recursion
         $$
            f_{m,1}(X)  =
                 \begin{cases}
                     0,                                                & \mbox{if } m=0;\\
                    1,                                                & \mbox{if } m=2;\\
                    (X+2) f_{m-2,1}(X) - f_{m-4,1}(X),     & \mbox{if } m \geq 4.
                \end{cases}
        $$

 For $n$ odd these are defined by the recursion
     $$
        f_{m,1}(X) =
             \begin{cases}
                     1,                                             & \mbox{if } m=1\\
                     X+1,                                          & \mbox{if } m=3;\\
                    (X+2)f_{m-2,1}(X) - f_{m-4,1}(X), & \mbox{if } m \geq 5.
             \end{cases}
        $$

These two families of polynomials were studied by \hbox{A.M.~Morgan-Voyce} in 1959 in order to study electrical resistance in ladder networks, \cite{Morgan-Voyce:59},  see also Ferri \cite{Ferri:97}, and Swamy \cite{Swamy:00, Swamy:68, Swamy:66}.   Morgan-Voyce and others use the notation $b_k=f_{2k+1,1}$ and $B_k=f_{2k+2,1}$ to denote these polynomials.  Each of these two families is an orthonormal family of polynomials.

Non-recursive descriptions are
\begin{align*}
  f_{2k+1,1}(X) = b_{k}(X) &= \sum_{i=0}^k \binomial{k+i}{k-i}X^i&\text{ for } k \geq 0\\
  f_{2k+2,1}(X) = B_k(X) &= \sum_{i=0}^k \binomial{k+i+1}{k-i}X^i&\text{ for } k \geq 0.
\end{align*}

The Morgan-Voyce polynomials are closely related to Fibonacci polynomials.  The Fibonacci polynomials are defined by the recursion
    $$
        \Fibpoly{m}(X) =
              \begin{cases}
                  0,                                             & \mbox{if } m=0\\
                  1,                                             & \mbox{if } m=1;\\
                  X\, \Fibpoly{m-1}(X) + \Fibpoly{m-2}(X), & \mbox{if } m \geq 2.
            \end{cases}
     $$
It is easy to see that the polynomial $\Fibpoly{m}(X)$ is an even polynomial if $m$ is odd and is an odd polynomial if $m$ is even.  Thus $\Fibpoly{2k+1}(X)$ and $\Fibpoly{2k}(X)/X$ are both even polynomials.  In fact
    \begin{align*}
        \Fibpoly{2k+1}(X) &= b_k(X^2)\\
        \Fibpoly{2k+2}(X) &= X B_k(X^2)
    \end{align*}
Like both families of Morgan-Voyce polynomials, the Fibonacci polynomials also form an orthonormal family.

\section{Order of Appearance}
Recall from \secref{planes} that $\Q^z$ denotes the pencil of $p+1$ planes $\calp_1, \calp_2,\dots,\calp_{p+1}$ each of which contain $\field_p$ and satisfy $\nu(\calp_i)=z \in \field$.

Let $\field_{p^m}$ be the smallest field containing a plane $\calp$ such that $\nu(\calp)=z$.  Note that $\calp = c_{i}\cdot \calp_i$ for some $c_{i} \in \calp \subset \field_{p^m}$.  Thus each plane $\calp_i = c_i^{-1} \calp$ lies in $\field_{p^m}$ but not in any proper subfield, that is, $m$ is the least integer satisfying  $f_{m,p}(z)=0$.  Furthermore any point $x \in \widehat{\calp_i}$ yields a basis $\{x,1\}$ of $\calp_i$ and thus $\nu(x,1) = \nu(\calp_i) = z$.  In particular, no point of $\widehat{\calq^z}$ lies in any subfield of $\field_{p^m}$.

The special case of $\calq^1$ is interesting.  Since $f_{m,p}(1) = \Fib{m}$ the least $m$ such that $f_{m,p}(1) = 0$ is the least $m$ such that $p$ divides $\Fib{m}$.  This number $m$ has been studied and is called the order (or rank) of {\em appearance} of $p$ (in the Fibonacci sequence).  It is also known as the order or rank of {\em apparition} of $p$ and as the Fibonacci {\em entry point} of $p$.  We will follow the convention of using $\alpha(p)$ to denote the  order of appearance of $p$.  For example, $\alpha(11) =10$ since the prime $11$ divides $\Fib{10} = 55$  and does not divide $\Fib{i}$ for $0 \le i <10$.

In 1960, Wall \cite{Wall:60} showed that for every integer $Z$ there is a Fibonacci number $\Fib{m}$ which is divisible by $Z$.  The least such $m$ is the order of \appearance\ of $Z$ and $\Fib{n}$ is divisible by $Z$ if and only if $n$ is divisible by $m$.  Sall\'e \cite{Salle:75} showed that $\alpha(Z) \leq 2Z$ and this bound is sharp with equality if and only if $Z=6(5^j)$ where $j$ is a non-negative integer. For a prime $p$ it is known that $\alpha(p)$ divides $p - \legendre{5}{p}$  where $ \legendre{5}{p}$ denotes the Legendre symbol.  In 1913, Carmichael \cite{Carmichael:13} proved that for every $m \neq 1,2,6,12$ there exists a prime $p$ such that $\alpha(p)=m$. There is an extensive literature on the order of appearance, the Fibonacci sequence in general, and related topics.  We refer the interested reader to Kla\v{s}ka, \cite{Klaska:18}, or to the book of Koshy \cite{Koshy:11}.

In a reflection of the Fibonacci order of appearance we define the order of appearance of any element $z \in \field$ to be the least integer $m$ such that $f_{m,p}(z)=0$.  We denote this number by $\alpha(z,p)$.  In this terminology, the classical Fibonacci order of appearance of $p$ is the order of appearance of the element $1 \in\field_p \subset \field$, i.e., $\alpha(p)=\alpha(1,p)$.

\begin{exa}
   The polynomial $f_{3,p}(X)=X+1$ is the only one of these polynomials which is independent of $p$ and non-constant.
     Since $f_{3,p}(-1)=0$ we see that $\alpha(-1,p)=3$ for all primes $p$.   In fact this shows that $\Q^{-1} = \field_{p^3}$ for all $p$.
     We can also deduce this result directly from the definition of $\nu$ and the fact that $a^{p^3}=a$ for all
     $a \in \field_{p^3} \setminus \field_p$.

  We also note that applying the Rational Root Theorem to $f_m,p(X)$ shows that -1 is the only
  rational number which yields a root $f_{m,p}$ for a fixed $m$ and all $p$.    Below we will see that $f_{p,p}(-4)=0$ for all $p$.
\end{exa}

When we restrict the invariants $I_0$ and $I_1$ to pairs of the form $\{x,1\}$ they become algebraically dependent:
    \begin{align*}
        I_1(x,1) &= \frac{[0,2]}{[0,1]}(x,1) = \frac{x^{p^2}-x}{x^p-1} \text{ and }
        I_0(x,1) = \frac{[1,2]}{[0,1]}(x,1) = \frac{x^{p^2}-x^p}{x^p-1} \ .
  \end{align*}
Thus $I_1(x,1) = I_0(x,1) + 1$.  Therefore
    $$
        \nu(x,1) = -\frac{I_1(x,1)^{p+1}}{I_0(x,1)^p} = - \frac{(I_0(x,1)+1)^{p+1}}{I_0(x,1)^p}\ .
    $$

Define $\Gamma:\field \to \field$ by $\Gamma(X) = -(X+1)^{p+1}/X^p$.  Then the function $\nu(\cdot,1)$ factors as
    $$
        \nu (\cdot,1): \field \setminus \field_p \mathrel{\mathop{\longrightarrow}^{I_0(\cdot,1)}} \field  \mathrel{\mathop{\longrightarrow}^{\Gamma}} \field \ .
    $$
Clearly $\Gamma(X) = z$ if and only if  $X^{p+1} + (1+z)X^p + X +1=0$.  We define
    $$
        \gamma_z(X) := X^{p+1} + (1+z)X^p + X +1\ .
    $$
Thus $\nu(x) = z$ if and only if $\gamma_z(I_0(x,1)) =0$.  Since $\frac{d\, \gamma_z}{d\, X} = X^p +1 =(X+1)^p$, we see that $\gamma_z$ has a repeated root only when $z=0$.   If $z=0$ then $\gamma_z(0)=X^{p+1}+X^p+X+1 = (X+1)^{p+1}$ and the pencil $\calq^0$ is just the field $\field_{p^2}$ and the pencil of $p+1$ planes all coincide.

 For $z\neq 0$ the polynomial $\gamma_z(X)$ has $p+1$ distinct simple roots.  We label the roots of $\gamma_z(X)$ by $t_1,t_2,\dots,t_{p+1}$ and the
 planes $\calp_i$ of $\calq^z$ such that $I_0(\calp_i) = t_i$ for $i=1,2,\dots, p+1$.

We have $\nu(\calp_i)=z$ for each plane $\calp_i$ in the pencil $\calq^z$. We define $\tau_z(X) := \gamma_z(I_0(X,1)) = \gamma_z( (X^p-X)^{p-1} ) \in\field_p[X]$.  Then
    $$
        \{x \in \field \mid \tau(x)=0 \} = \cup_{i=1}^{p+1} \widehat{\calp}_i = \widehat{\calq}^z\ .
    $$
The splitting field for $\tau_z(X)$ is the smallest field containing $\calq^z$, and this field is $\field_{p^m}$ where $m$ is the order of appearance of $z$, by definition.

The element $z$ lies in $\field_{p^k}$ for some minimal $k$ and $\gamma_z(X) \in \field_{p^k}[X]$.  Versions of most of the following results for general values of $k$ are true but the analysis is significantly more involved than the case $k=1$, and this is the case we believe to be the most interesting for most readers.

For the remainder of this article, then, we will assume that $z \in \field_p$.

The Galois group of $\field_{p^{m}}$ over $\field_{p}$ is cyclic of order $m$ and is generated by the Frobenius map $\Frob$ given by $\Frob(X)=X^p$.  This map fixes $z$ and permutes the roots of $\gamma_z(X)$.  It also acts on the roots of $\tau_z(X)$ and thus on $\calq^z$.

Take $x_i \in \widehat{\calp_i}$ so that $\{x_i,1\}$ is a basis of $\P_i$.    Note that the plane $\calp_i$ is the unique plane in $I_0^{-1}(t_i)$ which contains the line $\field_p$.  From this fact, it is easy to see that $\Frob(\calp_i) = \calp_j$ where $\Frob(t_i) = t_j$ and $\{x_i^p,1\}$ is a basis of  $\calp_j$.  This shows that the permutation action of $\Frob$ on the set of planes $\{\calp_1,\calp_2,\dots,\calp_{p+1}\}$ is the same as its action on $\{t_1,t_2,\dots,t_{p+1}\}$ the set of roots of $\gamma_z(X)$. In particular, $\Frob(\calp_i)=\calp_i$ if and only if $t_i \in \field_p$.  We note that in this case $\calp_i$ is stabilized by $\Frob$ but it is never fixed pointwise.

The roots of $\gamma_z(X)$ fixed by $\Frob$ are those lying in $\field_p$, that is, those corresponding to linear factors in the factorization of $\gamma_z(X)$ into irreducible polynomials over $\field_p[X]$.  We may easily determine these linear factors for they are the linear divisors of $\gcd(\gamma_z(X),X^p-X)$.  Since
    $$
        \gamma_z(X) = (X+z+1)(X^p-X)\  \ +\ \ X^2 + (z+2)X +1
    $$
we see that
   \begin{align*}
       \gcd(\gamma_z&(X),X^p-X) = \gcd(X^p-X,X^2 + (z+2)X +1) \\
        = & \begin{cases}
                (X-r_+)(X-r_{-})\\
                \qquad\text{where }r_\pm = \frac{-(z+2)\pm\sqrt{z^2+4z}}{2}, &\text{if }z^2+4z \text{ is a square} \pmod{p};\\
                    1, &\text{if } z^2+4z \text{ is a non-square} \pmod{p};\\
                X-1, &\text{if } z=-4;\\
                X+1, &\text{if }z=0.
   \end{cases}
 \end{align*}
In particular, if $z\neq0,-4$ then $\gamma_z(X)$ has either two distinct linear factors or has no linear factors.

Since the roots of linear factors of $\gamma_z(X)$ behave differently than the roots of other factors we define
    $$
        \overline\gamma_z(X) :=
            \begin{cases}
                \gamma_z(X)/(X^2+(z+2)X+1),  &z^2+4z \text{ is a square} \pmod{p};\\
                \gamma_z(X), & \text{if } z^2+4z \text{ is a non-square} \pmod{p};\\
                \gamma_z(X)/(X-1), & \text{if } z=-4;\\
                1,                           &\text{if }z=0.
   \end{cases}
   $$
See Example \ref{p=19}.
\medskip

The following surprising result is very useful.
\begin{thm}\label{thm:Frob2}
Let $t$ be a root of $\overline\gamma_z(X)$ where $z \in \FF_p$.   Then $I_0(t) = \Frob^2(t)$.  In particular  $I_0(t)$ is also a root of $\overline\gamma_z(X)$.
\end{thm}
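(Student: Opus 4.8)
The plan is to exploit the fact that $\gamma_z(X) = X^{p+1} + (1+z)X^p + X + 1$ is \emph{biaffine} in $X$ and $X^p$: writing $\gamma_z(t) = t\cdot t^p + (1+z)\,t^p + t + 1$, the equation $\gamma_z(t)=0$ is \emph{linear} in $t^p$. Since $\overline\gamma_z = 1$ has no roots precisely when $z=0$, the existence of a root $t$ forces $z\neq 0$, and being a root of $\overline\gamma_z$ means $t\notin\field_p$. Solving the linear equation for $t^p$ then produces the Möbius relation
$$
    t^p = \phi(t), \qquad \phi(w) := -\frac{w+1}{w+z+1},
$$
where the denominator is nonzero because $t+z+1=0$ would force (from $t^p(t+z+1)=-(t+1)$) first $t=-1$ and then $z=0$. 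Thus the Frobenius action on the roots of $\gamma_z$ is realized by a single linear-fractional map $\phi$; recognizing this linearization is the conceptual heart of the proof.

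Next I would use $z\in\field_p$, so $\gamma_z\in\field_p[X]$ and $\Frob$ permutes its roots. Hence $t^p$ is again a root and $t^{p^2} = (t^p)^p = \phi(t^p) = \phi^2(t)$. A short composition computes $\phi^2(t) = -1/(t+z+2)$; in particular $t+z+2\neq 0$, since $\phi^2(t)=t^{p^2}$ is a genuine element of $\field$ and cannot be a pole of $\phi^2$. I would then rewrite the quantity of interest: by definition $I_0(t)=I_0(t,1)=(t^p-t)^{p-1}$, and as $t\notin\field_p$ we have $t^p\neq t$, so
$$
    I_0(t) = \frac{(t^p-t)^p}{t^p-t} = \frac{t^{p^2}-t^p}{t^p-t}.
$$
Writing $a=t$, $b=\phi(a)=t^p$, $c=\phi^2(a)=t^{p^2}$, the claim $I_0(t)=t^{p^2}$ becomes the purely formal identity $(c-b)/(b-a)=c$, i.e. $c\,(1+a-b)=b$.

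I would verify this directly from $b=-(a+1)/(a+z+1)$ and $c=-1/(a+z+2)$: one finds $1+a-b = (a+1)(a+z+2)/(a+z+1)$, whence
$$
    c\,(1+a-b) = \frac{-1}{a+z+2}\cdot\frac{(a+1)(a+z+2)}{a+z+1} = -\frac{a+1}{a+z+1} = b,
$$
as required. This gives $I_0(t)=t^{p^2}=\Frob^2(t)$. Finally, since the roots of $\overline\gamma_z$ are exactly the roots of $\gamma_z$ outside $\field_p$, and $\Frob$ preserves both $\gamma_z$ and $\field_p$, the set of roots of $\overline\gamma_z$ is $\Frob$-stable; hence $\Frob^2(t)$ is again a root of $\overline\gamma_z$, yielding the ``in particular'' assertion.

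The only genuinely substantive step is the first: seeing that $\gamma_z$ is affine in $t^p$ so that Frobenius collapses to the Möbius map $\phi$. Everything afterward is bookkeeping, the sole point of care being that the denominators $t+z+1$ and $t+z+2$ never vanish; I expect the mild obstacle to be confirming these, but both follow automatically from $z\neq 0$ together with the fact that $t^{p^2}$ is a finite field element. The degenerate case $z=0$ (where $\overline\gamma_z=1$ has no roots and the statement is vacuous) is ruled out at the outset.
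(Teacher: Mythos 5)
Your proof is correct and is essentially the paper's own argument in different clothing: the paper substitutes $s=t+z+1$ to turn $\gamma_z$ into the trinomial $X^{p+1}-zX-z$ and reads off $s^p=z+zs^{-1}$, which is precisely your M\"obius relation $t^p=\phi(t)$, after which both arguments compute $I_0(t)=(t^p-t)^{p-1}=\frac{t^{p^2}-t^p}{t^p-t}$ and $\Frob^2(t)$ and find that both equal $-1/(t+z+2)$. Your treatment of the degenerate case $z=0$ and of the nonvanishing of the denominators $t+z+1$ and $t+z+2$ is sound, so no gap remains.
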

\begin{proof}
   Put $s=t+z+1$.  Then $s$ is a root of $\gamma_z(X-z-1)=X^{p+1} - z X - z$ and so $s^{p+1} = sz + z$.  Thus $s^p  = z+zs^{-1}$ and $s^{-p} = \frac{s}{zs+z}$.   Furthermore  both $s,t \notin \field_p$ since the minimal polynomial of $t$ is not linear.  Thus
   $s^p-s\neq 0$.
   Therefore
    \begin{align*}
          I_0(t) &=I_0(s) = (s^p-s)^{p-1} =  \frac{(s^p-s)^p}{s^p-s} = \frac{(zs^{-1}+z-s)^p}{zs^{-1}+z-s}\\
                      &=\frac{zs^{-p}+z-s^p}{zs^{-1}+z-s}= \frac{\frac{s}{s+1}+z-(zs^{-1}+z)}{zs^{-1}+z-s} = \frac{\frac{s}{s+1}-zs^{-1}}{zs^{-1}+z-s}\\
                      &=\frac{s-z-zs^{-1}}{(zs^{-1} + z - s)(s+1)} = \frac{-1}{s+1}
    \end{align*}
 and
    \begin{align*}
          \Frob^2(t) &= (s-z-1)^{p^2} =  (s^p-z-1)^p = (zs^{-1}+z-z-1)^p\\
                      &=(zs^{-1}-1)^p
                       =  zs^{-p}-1 = \frac{s}{s+1}-\frac{s+1}{s+1} = \frac{-1}{s+1}\ .
    \end{align*}
 Since $\Frob$ permutes the roots of $\overline\gamma_z(X)\in \field_p[X]$ the final statement of the theorem holds.
\end{proof}


\begin{thm}\label{thm:main2}
Suppose $z \in \field_p^*$  and let $m$ denote the order of appearance of $z$.   Then all irreducible factors of $\overline\gamma_z(X)$ have degree $m$.
In particular, $\gamma_z(X)$ and $\tau_z(X)$ share the same splitting field.  Moreover
    \begin{enumerate}
        \item Each root of $\overline\gamma_z(X)$ is contained in $\widehat{\calq^z}$ and distinct roots lie in distinct planes of the pencil.
        \item  If $z^2+4z$ is a non-square then $\gamma_z(X)$ has no linear factors.
        \item If $z=-4$ then $\gamma_z(X)$ has the unique linear factor $X-1$.
        \item If $z^2+4z$ is a square then $\gamma_z(X)$ has exactly 2 linear factors $X - r_+$ and $X - r_-$.
  \end{enumerate}
\end{thm}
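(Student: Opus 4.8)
The plan is to use Theorem~\ref{thm:Frob2} to show that each root $t$ of $\overline{\gamma}_z$ spans, together with $1$, a plane lying in the pencil $\calq^z$; since every plane of $\calq^z$ has minimal field of definition $\field_{p^m}$, this will pin $\field_p(t)=\field_{p^m}$ and force the irreducible factor of $t$ to have degree $m$.

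Fix an irreducible factor of $\overline{\gamma}_z$, let $t$ be one of its roots, and set $d:=[\field_p(t):\field_p]$, the degree of that factor. Since $\overline{\gamma}_z$ has no linear factors, $t\notin\field_p$, so $\calp':=\Span_{\field_p}\{t,1\}$ is a genuine plane whose smallest field of definition is $\field_p(t)=\field_{p^d}$. The key step is to show $\nu(\calp')=z$. Evaluating on the basis $\{t,1\}$ gives $\nu(t,1)=\Gamma(I_0(t,1))$, and Theorem~\ref{thm:Frob2} supplies $I_0(t,1)=\Frob^2(t)=t^{p^2}$. Since $\Frob$ permutes the roots of $\gamma_z$, the element $t^{p^2}$ is again a root of $\gamma_z$, so $\Gamma(t^{p^2})=z$ by the defining property of $\gamma_z$; hence $\nu(\calp')=z$ and $\calp'$ is one of the $p+1$ planes of $\calq^z$. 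The order-of-appearance fact recorded in the discussion preceding the theorem states that every plane of $\calq^z$ lies in $\field_{p^m}$ but in no proper subfield; since the minimal field of $\calp'$ is $\field_p(t)=\field_{p^d}$, we get $\field_{p^d}=\field_{p^m}$, i.e.\ $d=m$. As $t$ was arbitrary, every irreducible factor of $\overline{\gamma}_z$ has degree $m$. Since $\overline{\gamma}_z$ is nonconstant (its degree is at least $p-1$), $\gamma_z$ has a root of degree exactly $m$ while all of its roots lie in $\field_{p^m}$; thus $\gamma_z$ splits over $\field_{p^m}$, which is the known splitting field of $\tau_z$, establishing the ``in particular'' assertion.

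For part (1), the computation above already places $t$ in $\widehat{\calp'}\subseteq\widehat{\calq^z}$. To see that distinct roots lie in distinct planes, I would use that $I_0(\cdot,1)$ is constant on each $\widehat{\calp_i}$: replacing $x$ by $ax+b$ with $a\in\field_p^*$ and $b\in\field_p$ multiplies $I_0(x,1)$ by $a^{p-1}=1$. Consequently two roots $t,t'$ lying in the same plane would satisfy $t^{p^2}=I_0(t,1)=I_0(t',1)=(t')^{p^2}$, forcing $t=t'$. Parts (2)--(4) are direct readings of the $\gcd(\gamma_z(X),X^p-X)$ computation carried out just before the theorem, which already records the linear factors of $\gamma_z$ in the non-square, $z=-4$, and square cases respectively.

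The genuine computational core is Theorem~\ref{thm:Frob2}, which I am assuming here; granting it, the only real idea required is to recognize that a root $t$ of $\overline{\gamma}_z$ is itself a point of $\widehat{\calq^z}$, so that $I_0$ acts on the pencil as $\Frob^2$ and the field generated by $t$ is dictated by the order of appearance of $z$.
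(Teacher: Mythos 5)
Your proposal is correct and follows essentially the same route as the paper: both arguments hinge on Theorem~\ref{thm:Frob2} to identify $I_0(t,1)=\Frob^2(t)$ as another root of $\overline\gamma_z$, conclude $\tau_z(t)=0$ so that $t\in\widehat{\calq^z}$ and hence $[\field_p(t):\field_p]=\alpha(z,p)$, and then use the injectivity of $I_0$ on the planes of the pencil for the distinctness claim. Your version merely spells out a few steps (the invariance of $I_0$ under affine substitution, the reading-off of parts (2)--(4) from the $\gcd$ computation) that the paper leaves implicit.
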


\begin{proof}
Let $t$ be any root of $\overline\gamma_z(X)$.  Then $I_0(t,1) = \Frob^2(t)$ is also a root of  $\overline\gamma_z(X)$ and thus $\tau_z(t) = \gamma_z(I_0(t,1)) =0$ showing that $t \in \widehat{\calq^z}$.  Consequently the minimal polynomial of $t$ has degree $m=\alpha(z,p)$.
Since $I_0(\calp_i) = t_i$ we see that no two distinct roots of $\overline\gamma_z(X)$ can lie in a single plane $\calp_i$.
\end{proof}

\begin{cor}\label{z=-4}
Let $\legendre{y}{p}$ denote the Legendre symbol.  Then we have that the order of appearance of $z \in \field_p^*$  must divide $\deg(\overline\gamma_z(Z)) = p - \legendre{z^2+4z}{p}$.  In particular, $\alpha(-4,p) = p$ for all $p$ and $\overline\gamma_{-4}(X) = X^p - 1 -2 \sum_{i=1}^{p-1} X^i$ is always irreducible.
\end{cor}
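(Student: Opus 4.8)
The plan is to read the divisibility off Theorem~\ref{thm:main2} once we know $\deg\overline\gamma_z(X)$, and then to treat $z=-4$ as the special case in which this degree equals the prime $p$ itself. The substantive content has already been done in Theorem~\ref{thm:main2}, which guarantees that every irreducible factor of $\overline\gamma_z(X)$ has degree exactly $m=\alpha(z,p)$; the rest is a degree count and an explicit division. First I would record $\deg\overline\gamma_z(X)$ for $z\in\field_p^*$ straight from its definition. Since $\gamma_z(X)$ has degree $p+1$ and (for $z\neq0$) has $p+1$ distinct simple roots, the three branches of the definition of $\overline\gamma_z(X)$ simply strip off the $\field_p$-rational roots: when $z^2+4z$ is a nonzero square we remove the quadratic $X^2+(z+2)X+1$ and are left with degree $p-1$; when $z^2+4z$ is a non-square we remove nothing and keep degree $p+1$; and when $z=-4$ — the unique element of $\field_p^*$ with $z^2+4z=z(z+4)=0$ — we remove the single linear factor $X-1$ and are left with degree $p$. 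These three branches correspond precisely to $\legendre{z^2+4z}{p}=1,-1,0$, so uniformly $\deg\overline\gamma_z(X)=p-\legendre{z^2+4z}{p}$.

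For the divisibility I would observe that the roots of $\gamma_z(X)$, and hence those of $\overline\gamma_z(X)$, are simple for $z\neq0$, so $\overline\gamma_z(X)$ is a product of \emph{distinct} monic irreducibles, each of degree $m$ by Theorem~\ref{thm:main2}. Its total degree is therefore a multiple of $m$, which gives $m\mid p-\legendre{z^2+4z}{p}$. As a sanity check, taking $z=1$ recovers the classical statement that $\alpha(p)$ divides $p-\legendre{5}{p}$, since $z^2+4z=5$ there.

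For $z=-4$ the degree formula yields $m\mid p$, so $m\in\{1,p\}$ because $p$ is prime. The value $m=1$ is impossible: by construction $\overline\gamma_{-4}(X)$ has had its unique $\field_p$-rational root $X=1$ divided out, so it has no linear factors over $\field_p$, whereas $m=1$ would force every irreducible factor to be linear. Hence $m=\alpha(-4,p)=p$, and since $\overline\gamma_{-4}(X)$ has degree $p$ with all irreducible factors of degree $p$, it is itself irreducible. To pin down the explicit form I would compute $\gamma_{-4}(X)=X^{p+1}-3X^p+X+1$ and divide by $X-1$; multiplying $(X-1)\bigl(X^p-1-2\sum_{i=1}^{p-1}X^i\bigr)$ back out and checking that the telescoping of the two shifted sums reproduces $X^{p+1}-3X^p+X+1$ confirms the claimed identity $\overline\gamma_{-4}(X)=X^p-1-2\sum_{i=1}^{p-1}X^i$.

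The main obstacle is bookkeeping rather than depth, since Theorem~\ref{thm:main2} carries all the weight. The one place demanding genuine care is matching the three branches of the definition of $\overline\gamma_z(X)$ with the three values of the Legendre symbol — in particular recognizing that the value $0$ occurs exactly for $z=-4$ within $\field_p^*$ — together with invoking the simplicity of the roots of $\gamma_z(X)$ so that no repeated irreducible factor inflates the degree and breaks the divisibility count.
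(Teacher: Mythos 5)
Your proof is correct and follows exactly the route the paper intends: the corollary is stated without proof as an immediate consequence of Theorem~\ref{thm:main2}, and your degree count ($\deg\overline\gamma_z = p - \legendre{z^2+4z}{p}$ via the three branches, each irreducible factor of degree $m$ forcing $m \mid \deg\overline\gamma_z$), together with ruling out $m=1$ for $z=-4$ and the explicit division by $X-1$, is precisely the implicit argument. No gaps.
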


\subsection{Roots of $\gamma_z(X)$ in $\field_p$} \ \\

Let $z\in \FF$ and suppose that $z^2+4z$ is a quadratic residue mod $p$.  Then $\gamma_z(X)$ has the two linear factors
 $X-r_+$ and $X-r_-$ where $r_\pm = \frac{-(z+2)\pm\sqrt{z^2+4z}}{2}$.  Note that $r_+$ and $r_-$ are inverses of one another.
 Furthermore $r_+=r_-$ if and only if $z=-4$ or $z=0$ when $r_+=r_-=1$ and $r_+=r_-=-1$ respectively.

Let $\calp \in\calq^z$ be the plane with $I_0(\calp) = r_+$.  Then for $a \in \widehat{\calp}$ we must have $a^p \in \P$ and thus there exists $c,d \in \field_p$ with $c\neq 0$ such that $a^p = c a + d$.   Now $I_0(a,1) = (a^p -a)^{p-1} = r_+$. Therefore
    \begin{align*}
         r_+ & = \frac{(a^p-a)^p}{a^p-a}
                                      = \frac{((c-1)a + d)^p}{(c-1)a + d}\\
                                     & =  \frac{((c-1)(ca+d) + d}{(c-1)a+d}
                                      = \frac{ c(c-1)a +cd}{(c-1)a+d}
                                     = c
      \end{align*}
Therefore $a^p =  r_+ a + d$.

Suppose now that $z\neq 0, -4$ so that $r_+\neq \pm 1$.  Applying $\Frob^{\ell}$ yields
   $$
        a^{p^\ell} =  r_+^\ell a + (1+r_+ + r_+^2 + \dots + r_+^{\ell-1})d = r_+^\ell a + \left(\frac{r_+^{\ell}-1}{r_+-1}\right)d\ .
    $$
Thus the smallest field containing $a$ is the the field of order $p^\ell$ where $r_+$ has order $\ell$ as an element of  the multiplicative group $\field_p^*$.  In summary we have proved the following theorem.

\begin{thm}  \label{thm:artin connection}
Let $z \in \field$ with $z\neq 0,-4$ be such that $z^2+4z$ is a square. Then the two elements $\frac{-(z+2)\pm\sqrt{z^2+4z}}{2}$ of $\field_p^*$ have multiplicative order $\alpha(z,p)$.
\end{thm}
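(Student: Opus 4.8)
The plan is to show that the common multiplicative order of $r_+$ and $r_-=r_+^{-1}$ equals $\alpha(z,p)$, where $r_\pm=\frac{-(z+2)\pm\sqrt{z^2+4z}}{2}$ are the two roots of $X^2+(z+2)X+1$. Since their product is the constant term $1$, the elements $r_+$ and $r_-$ are mutually inverse in $\field_p^*$ and therefore have the same order, so it suffices to treat $r_+$. I would first record that $z\neq 0,-4$ forces $r_+\neq\pm 1$; in particular $r_+-1$ is invertible, a fact the argument will rely on.

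Next I would exploit the plane $\calp\in\calq^z$ with $I_0(\calp)=r_+$ and a point $a\in\widehat{\calp}$, exactly as set up just before the statement. The preliminary fact, already derived there, is that $\Frob$ stabilizes $\calp$ (because $r_+\in\field_p$), so $a^p=ca+d$ for some $c,d\in\field_p$ with $c\neq 0$; evaluating $I_0(a,1)=(a^p-a)^{p-1}$ and simplifying forces $c=r_+$, giving $a^p=r_+a+d$. Iterating the Frobenius map then yields, by a one-line induction,
$$a^{p^\ell}=r_+^{\ell}a+\left(\frac{r_+^{\ell}-1}{r_+-1}\right)d\qquad(\ell\geq 1),$$
where the denominator is nonzero by the previous paragraph.

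To extract the order I would rewrite this as $a^{p^\ell}-a=(r_+^\ell-1)\bigl(a+\tfrac{d}{r_+-1}\bigr)$. Because $a\notin\field_p$ while $\tfrac{d}{r_+-1}\in\field_p$, the second factor is nonzero, so $a^{p^\ell}=a$ if and only if $r_+^\ell=1$. Hence the least $\ell$ for which $a\in\field_{p^\ell}$ is exactly the multiplicative order of $r_+$. On the other hand $a\in\widehat{\calq^z}$, so by Theorem~\ref{thm:main2} and the definition of the order of appearance the smallest field containing $a$ is $\field_{p^m}$ with $m=\alpha(z,p)$. Comparing these two descriptions of the least exponent gives $\mathrm{ord}(r_+)=\alpha(z,p)$, and the same value holds for $r_-=r_+^{-1}$.

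I expect the crux to be the identity $c=r_+$, i.e.\ that Frobenius scales $a$ by the \emph{specific} element $r_+$ rather than some other member of $\field_p^*$; this is precisely where the invariant $I_0$ and the labelling $I_0(\calp)=r_+$ do their work. The remaining steps---the Frobenius iteration and the comparison with $\alpha(z,p)$---are routine, the only care being the exclusion of $z=0,-4$ (to keep $r_+-1\neq 0$) and the hypothesis $a\notin\field_p$ (to keep the second factor nonzero).
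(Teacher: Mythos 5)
Your proposal is correct and follows essentially the same route as the paper: it identifies $r_\pm$ as the mutually inverse roots of $X^2+(z+2)X+1$, uses the plane $\calp$ with $I_0(\calp)=r_+$ to derive $a^p=r_+a+d$, iterates Frobenius, and compares the least $\ell$ with $a\in\field_{p^\ell}$ to $\alpha(z,p)$. The only difference is your explicit factorization $a^{p^\ell}-a=(r_+^\ell-1)\bigl(a+\tfrac{d}{r_+-1}\bigr)$, which cleanly justifies a step the paper leaves implicit.
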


\begin{rem}
    \begin{enumerate}
        \item A slight modification of the above analysis works for the cases $z=0$ (resp.~$z=-4$) when have $r_+=r_-=-1$ (resp.~$r_+=r_-=1$)  and shows that the minimum $\ell$ with  $\Frob^\ell(a)=a$ is $\ell=p$ (resp. $\ell=2$).
        \item As we will see below (Theorem~\ref{thm:better artin connection}) we may also prove the preceeding result, indeed a stronger version of it, using the theory of Lehmer numbers.
  \end{enumerate}
  \end{rem}
The order of elements of the finite cyclic group $\field_p^*$ has been much studied.  The most famous outstanding question is Artin's Conjecture on primitive roots.  Emil Artin made his conjecture in 1927, \cite{Artin:82}. It states in part the following
\begin{conj}[Artin's Conjecture]
Suppose that $R$ is an integer which is not a perfect square and is not -1.  Then $R$ is a primitive root modulo $p$ for infinitely many primes $p$.
\end{conj}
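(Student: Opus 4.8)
The statement is Artin's celebrated conjecture on primitive roots, which remains open unconditionally; I will therefore describe the heuristic that predicts it, Hooley's conditional proof under GRH, and the precise point at which an unconditional argument breaks down. The plan is to recast primitivity as a sieve condition over Kummer extensions and to count the surviving primes by the Chebotarev density theorem. First I would observe that, for a prime $p$ not dividing $R$, the residue $R$ fails to be a primitive root modulo $p$ exactly when there is a prime $\ell \mid p-1$ for which $R$ is an $\ell$-th power modulo $p$; and for a fixed prime $\ell$ the two conditions $\ell \mid p-1$ and ``$R$ is an $\ell$-th power mod $p$'' together say precisely that $p$ splits completely in the Kummer field $K_\ell := \bbq(\zeta_\ell, R^{1/\ell})$. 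The hypotheses $R \neq -1$ and $R$ not a perfect square are exactly what make the conclusion possible — a square is a quadratic residue, hence never a primitive root modulo an odd prime — and they let one arrange $[K_\ell:\bbq] = \ell(\ell-1)$ for every prime $\ell$, so that the sieve has a chance of leaving a positive proportion of primes.

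Next I would apply inclusion--exclusion over squarefree $k$ and write the count of primes $p \le x$ for which $R$ is a primitive root as $\pi_R(x) = \sum_{k \ge 1}\mu(k)\,\pi_{K_k}(x)$, where $K_k := \bbq(\zeta_k, R^{1/k})$ and $\pi_{K_k}(x)$ counts the primes $p \le x$ that split completely in $K_k$. Estimating each term by Chebotarev gives $\pi_{K_k}(x) \sim \tfrac{1}{[K_k:\bbq]}\,\mathrm{Li}(x)$, and summing the main terms produces the predicted density
\[
   A(R) \;=\; \sum_{k \ge 1}\frac{\mu(k)}{[K_k:\bbq]} \;=\; \prod_{\ell}\Bigl(1 - \frac{1}{\ell(\ell-1)}\Bigr)\cdot(\text{rational correction}),
\]
the product being Artin's constant and the correction accounting for the entanglement of $\bbq(\zeta_k)$ with $\bbq(R^{1/k})$ coming from the quadratic subfield attached to the discriminant of $R$. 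The decisive feature is that $A(R) > 0$, and positivity of this density is precisely the assertion that $R$ is a primitive root for infinitely many — indeed a positive proportion of — primes, which is the conjecture.

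The hard part, and the reason this is still a conjecture rather than a theorem, is the uniform control of the Chebotarev error term as $k$ grows with $x$. One must truncate the sum at a level $k \le \xi(x)$ and bound the tail $\sum_{k > \xi(x)}\pi_{K_k}(x)$, and for the intermediate range of $k$ the error $\pi_{K_k}(x) - \tfrac{1}{[K_k:\bbq]}\mathrm{Li}(x)$ is not known to be small enough unconditionally. Under the Generalized Riemann Hypothesis for the Dedekind zeta functions of the fields $K_k$ one has an effective Chebotarev bound of shape $O\bigl(\sqrt{x}\,\log(d_{K_k}\,x)\bigr)$; this permits taking $\xi(x)$ as large as about $\sqrt{x}/\log^2 x$, after which the remaining tail is handled by the elementary fact that primes admitting a large prescribed order-divisor are sparse. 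Assembling these pieces is exactly Hooley's 1967 argument and yields the conjecture conditionally. Absent GRH no such uniform error term is available in the middle range, so I would not expect to close the gap unconditionally; the strongest results known — those of Gupta--Murty and of Heath-Brown, obtained via the large sieve — establish the conjecture for all but at most two prime values of $R$ (and all but finitely many squarefree $R$), yet without identifying a single explicit exception.
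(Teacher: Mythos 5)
You have correctly recognized that this statement is an open conjecture: the paper states it as a \emph{conjecture} and offers no proof, only a survey of the partial results of Gupta--Murty, Gupta--Murty--Murty and Heath-Brown, together with the remark that no single integer is currently known to be a primitive root modulo infinitely many primes. Your write-up is therefore not a proof and does not claim to be one, and that is the right call. As an account of the state of the art it is accurate and goes well beyond what the paper records: the reduction of primitivity to non-splitting in the Kummer fields $\bbq(\zeta_\ell, R^{1/\ell})$, the inclusion--exclusion leading to Artin's constant with the rational entanglement correction, Hooley's conditional argument under GRH, and the precise identification of the unconditional obstruction (uniform Chebotarev error terms for $k$ in the middle range) are all correct. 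Your summary of the unconditional results also matches the paper's: at most two prime values of $R$ can fail, with no explicit exception identifiable.

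Two small points of precision. First, the hypothesis that $R$ is not a perfect square does not by itself force $[\bbq(\zeta_\ell, R^{1/\ell}):\bbq]=\ell(\ell-1)$ for every prime $\ell$; the degree can drop at $\ell=2$ and entanglement occurs when the quadratic subfield of $\bbq(R^{1/k})$ sits inside $\bbq(\zeta_k)$ --- this is exactly what your ``rational correction'' absorbs, and for certain $R$ (e.g.\ $R\equiv 1 \pmod 4$ squarefree) the naive product must genuinely be modified, though it remains positive under the stated hypotheses. Second, one should also exclude $R=0$ and note that the exclusion of perfect squares already rules out $R=1$. Neither point affects your conclusion that the conjecture is open; since the paper supplies no proof, there is nothing further to compare against.
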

Here, we say that $R$ is a primitive root modulo $p$ if the residue class of $R$ mod $p$ is a generator of
the multiplicative cyclic group $\FF_p^*$.
Artin included more in his conjecture, giving predicted densities for the set of primes for which $R$ is a primitive root.

In 1984, R.~Gupta and M.R.~Murty, \cite{Gupta+Muarty:84}. showed that there are sets of thirteen integers which include at least one for which Artin's conjecture is true.  This also showed that Artin's conjecture is true for almost all integers $R$.  Later R.~Gupta, V.K.Murty and M.R.~Murty, \cite{Gupta+Murty+Murty:87} reduced the bound of thirteen to seven.  Then in 1986, D.R.~Heath-Brown, \cite{Heath-Brown:86}, further improved this bound showing that there are at most two primes for which Artin's conjecture fails and at most three squarefree integers for which it fails.

Given these results, it is surprising that at present, there is no integer known to be a primitive root mod $p$ for infintely many primes $p$.  We recommend the article \cite{Murty:88} for a more detailed discussion of these results and their history.

 \subsection{Following Lehmer}\ \\

 In 1930, D.H. Lehmer, \cite{Lehmer:30}, extended the theory of Lucas sequences.   He considered two coprime integers $Q$ and $Z$ and the two numbers $A$ and $B$ such that $A+B=\sqrt{Z}$ and $AB=Q$. 

 Put $$
        U_n(\sqrt{Z},Q) 
               =
                                    \begin{cases}
                                        \frac{A^n-B^n}{A-B}, & \text{if $n$ is odd};\\
                                        \frac{A^n-B^n}{A^2-B^2}, & \text{if $n$ is even}.
                                    \end{cases}
    $$
 Then $U_n(\sqrt{Z},Q)$ satisfies
   $$
        U_n(\sqrt{Z},Q) = \begin{cases}
                                    Z U_{n-1}(\sqrt{Z},Q) - Q U_{n-2}(\sqrt{Z},Q), & \text{if $n$ is odd};\\
                                    U_{n-1}(\sqrt{Z},Q) - Q U_{n-2}(\sqrt{Z},Q), & \text{if $n$ is even}.
                                \end{cases}
   $$
Note that if $z \in \field_p$ then $z^{\theta(n-3,p)} =
    \begin{cases}
       z,  & \text{if $n$ is odd};\\
       1, & \text{if $n$ is even}.
    \end{cases}$\\
Thus the restriction of $f_{n,p}$ to $\field_p$ satisfies
    $$
    f_{n,p}(z) = \begin{cases}
                            z f_{n-1,p}(z) + f_{n-2,p}(z), & \text{if $n\geq 3$ is odd};\\
                            f_{n-1,p}(z) + f_{n-2}(z), & \text{if $n\ge 2$ is even},
                        \end{cases}
    $$
where $f_{0,p}(z)=0$ and $f_{1,p}(z)=1$.  These are exactly the recursion relations which define the classical Morgan-Voyce polynomials.  They are also the recursion relations which define $U_n(\sqrt{Z},-1)$. Therefore for any integer $Z$ we have
    $$
        \MV{n}(Z) = U_n(\sqrt{Z},-1) \mbox{ and } f_{n,p}(z) = U_n(\sqrt{Z},-1) \pmod{p}
    $$
where $Z \pmod{p}=z$. Therefore
    $$
    f_{n,p}(z) =  \begin{cases}
                            \frac{a^n-b^n}{a-b}, & \text{if $n$ is odd};\\
                            \frac{a^n-b^n}{a^2-b^2}, & \text{if $n$ is even},
                        \end{cases}
   $$
where $a$ and $b$ are the roots of the polynomial $X^2 - \sqrt{z}X - 1\in\field[X]$ provided $a^2\neq b^2$, that is, provided $z\in\field_p$ with $z \neq 0,-1/4$.

\bigskip
The fact that $\MV{n}(Z) \pmod{p} = f_{n,p}(z)$ implies the following.
\begin{thm}\label{thm:MV apparition}
Let $z \in \field_p^*$ and let $Z$ be an integer representative of $z$.  Then $\alpha(z,p)$ is the order of appearance of $p$ in the integer sequence of Morgan-Voyce values
$$\MV{1}(Z), \MV{2}(Z), \MV{3}(Z),\dots, \MV{p+1}(Z)\ ,$$
 i.e., $\alpha(z,p)$ is the least positive integer $m$ such that $p$ divides $\MV{m}(Z)$.
  \end{thm}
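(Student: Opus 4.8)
The plan is to read off the theorem from the congruence $\MV{n}(Z) \equiv f_{n,p}(z) \pmod{p}$ established in the paragraph immediately preceding the statement. First I would note that because each Morgan-Voyce polynomial $\MV{n}$ has integer coefficients and $Z \equiv z \pmod{p}$, reducing the integer $\MV{n}(Z)$ modulo $p$ produces exactly the element $f_{n,p}(z) \in \field_p \subset \field$. Consequently $p \mid \MV{m}(Z)$ holds if and only if $f_{m,p}(z) = 0$ in $\field_p$. Taking the least such $m$ on each side, the order of appearance of $p$ in the integer sequence $\MV{1}(Z), \MV{2}(Z), \dots$ coincides with the least $m$ for which $f_{m,p}(z)=0$, which is precisely $\alpha(z,p)$ by the definition of the order of appearance given above. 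This already identifies the least $m$ with $p \mid \MV{m}(Z)$ as $\alpha(z,p)$.

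The one remaining point is to justify restricting attention to the finite window $\MV{1}(Z), \dots, \MV{p+1}(Z)$, that is, to confirm that the first multiple of $p$ actually appears no later than the $(p+1)$-th term. This amounts to the bound $\alpha(z,p) \le p+1$, which I would extract from \corref{z=-4}: that corollary gives $\alpha(z,p) \mid \bigl(p - \legendre{z^2+4z}{p}\bigr)$. Since the Legendre symbol is at least $-1$, the dividend is at most $p+1$, so $\alpha(z,p) \le p+1$ whenever $z \neq 0,-4$; for the exceptional value $z=-4$ the same corollary records $\alpha(-4,p)=p \le p+1$, and the hypothesis $z \in \field_p^*$ rules out $z=0$. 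Thus in every case $\alpha(z,p) \le p+1$, and the order of appearance genuinely occurs within the listed terms.

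I expect no serious obstacle here, since the substantive content lives in the already-established congruence and in the divisibility bound of \corref{z=-4}. The only point requiring care is the bookkeeping behind the congruence: one must be sure that the integer recursion for $\MV{n}(Z)=U_n(\sqrt{Z},-1)$ reduces modulo $p$ to the recursion satisfied by $f_{n,p}(z)$ on $\field_p$, with matching initial data $f_{0,p}=0$ and $f_{1,p}=1$. Granting the coincidence of these two three-term recursions, as recorded just above the statement, the theorem follows immediately by the reduction in the first paragraph together with the window bound in the second.
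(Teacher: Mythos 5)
Your proposal is correct and follows essentially the same route as the paper, which simply observes that the congruence $\MV{n}(Z)\equiv f_{n,p}(z)\pmod{p}$ immediately identifies the least $m$ with $p\mid\MV{m}(Z)$ as the least $m$ with $f_{m,p}(z)=0$, i.e.\ $\alpha(z,p)$. Your additional check that the appearance falls within the window $1,\dots,p+1$, via the divisibility bound of \corref{z=-4}, is a small piece of bookkeeping the paper leaves implicit, but it does not change the argument.
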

We have shown that for $z \neq 0, -4$ that $f_{n,p}(z)=0$ if and only if $a^n=b^n$ if and only if $(a/b)^n = 1$ where $a,b$ are the roots of  $X^2 - \sqrt{z}X - 1$.  Let $r=a/b$.  Then $r=-a^2$ since $b=-1/a$.   Also $z=(a+b)^2 = (a-1/a)^2 = a^2 - 2 -a^{-2} = -r - 2 -1/r$.

Note that $z=0$ if and only if $\{a,b\}=\{1,-1\}$ if and only if $r=-1$.  Also $z=-4$ if and only if $a=b=1$ if and only if $r=1$.  Thus $z\in\{0,-4\}$ if and only if $r^2=1$ if and only if $a^4=1$.  Thus we have proved the following generalization of Theorem~\ref{thm:artin connection}.
\begin{thm}\label{thm:better artin connection}
Suppose that $z \in \field_p$ and $z \neq 0,-4$.  Put $r = \frac{-z-2\pm\sqrt{z^2+4z}}{2} \in \field_{p^2}$.  The multiplicative order of the $r$ as an element of $\field_{p^2}$ is the order of appearance of $z$.
\end{thm}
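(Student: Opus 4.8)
The plan is to reduce the statement to the equivalence recorded immediately before it, namely that for $z \neq 0, -4$ one has $f_{n,p}(z) = 0$ if and only if $r^n = 1$, where $r = a/b$ and $a, b$ are the roots of $X^2 - \sqrt{z}\,X - 1$. First I would pin down the algebraic nature of the $r$ appearing in the statement. Starting from the identity $z = -r - 2 - 1/r$ established above (equivalently $r + 1/r = -(z+2)$) and clearing the denominator, $r$ is seen to be a root of the quadratic $X^2 + (z+2)X + 1 \in \field_p[X]$. Its two roots are $\tfrac{-(z+2) \pm \sqrt{(z+2)^2 - 4}}{2} = \tfrac{-z-2 \pm \sqrt{z^2+4z}}{2}$, which is exactly the expression in the statement; since the constant term equals $1$, the two roots are nonzero and mutually inverse (they are $a/b$ and $b/a$). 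Consequently the phrase ``multiplicative order of $r$'' does not depend on the choice of sign, and because the quadratic has coefficients in $\field_p$ both roots lie in $\field_{p^2}$, which justifies the assertion $r \in \field_{p^2}$.

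With this identification the theorem is essentially immediate. By definition $\alpha(z,p)$ is the least positive integer $m$ with $f_{m,p}(z) = 0$. Since $z \neq 0, -4$ forces $a \neq \pm b$, neither denominator in the Lehmer formula for $f_{m,p}(z)$ vanishes, so $f_{m,p}(z) = 0$ holds precisely when $a^m = b^m$, i.e. when $r^m = 1$. Hence the least $m$ with $f_{m,p}(z) = 0$ coincides with the least $m$ with $r^m = 1$, and this is the multiplicative order of $r$ as an element of $\field_{p^2}^*$. This is the desired conclusion.

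I do not expect a serious obstacle, since the analytic work was already done in the Lehmer-formula computation preceding the statement. The two points deserving care are purely bookkeeping: (i) confirming that the radical expression for $r$ in the statement agrees with $r = a/b$, which is precisely the quadratic identification $r^2 + (z+2)r + 1 = 0$ above; and (ii) confirming the membership $r \in \field_{p^2}$. For (ii) I would deliberately argue from the minimal polynomial $X^2 + (z+2)X + 1 \in \field_p[X]$ rather than from the representation $r = -a^2$, because when $z$ is a non-residue modulo $p$ the element $a$ itself need not lie in $\field_{p^2}$, whereas the combination $r$ always does.
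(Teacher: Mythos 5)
Your proof is correct and takes essentially the same route as the paper: the paper's own justification is precisely the discussion preceding the theorem, namely that the Lehmer formula gives $f_{m,p}(z)=0$ if and only if $r^m=1$ for $r=a/b$, together with the identification $z=-r-2-1/r$, i.e.\ $r^2+(z+2)r+1=0$, which yields the radical expression in the statement. Your additional bookkeeping (the sign ambiguity being harmless because the two roots are mutually inverse, and $r\in\field_{p^2}$ via its minimal polynomial over $\field_p$) only makes explicit what the paper leaves implicit.
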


Combining our results we have
\begin{thm}\label{artin result}
Let $R \in \bbz$ with $R \neq 0, \pm 1$.   Let $r\in \field_p$ be the residue class of $R$ so that $z=-r-2-r^{-1}\in \field_p$ is the residue class of $-R-2-R^{-1}$.  Then $\gamma_z(X) = X^{p+1} + (z+1)X^p + X + 1 \in \field_p[X]$ has two linear factors and all its other irreducible factors have the same degree $m$ where $m$ is the order of $r$ as an element of the group $\field_p^*$.  In particular, $R$ is a primitive root mod $p$ if and only if $\gamma_z(X)/(X^2+(z+2)X+1)\in \field_p[X]$ is irreducible.
\end{thm}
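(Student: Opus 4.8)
The plan is to deduce this theorem directly from Theorem~\ref{thm:main2} and Theorem~\ref{thm:artin connection}, so that almost all the work is the bookkeeping that matches the given $r$ to the quantities appearing in those results. First I would record the elementary identity linking $r$ and $z$. Since $z=-r-2-r^{-1}$ we have $z+2=-(r+r^{-1})$, so $r$ and $r^{-1}$ are precisely the two roots of $X^2+(z+2)X+1$; equivalently $z^2+4z=(z+2)^2-4=(r-r^{-1})^2$ is a square in $\field_p$, and $\frac{-(z+2)\pm\sqrt{z^2+4z}}{2}=\{r,r^{-1}\}$. Throughout I would assume $p\nmid R(R^2-1)$, so that $r\in\field_p^*$ with $r\neq\pm1$, i.e. $z\neq 0,-4$.

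Granting this, the factorization statement is immediate. Because $z^2+4z$ is a nonzero square, the fourth case of Theorem~\ref{thm:main2} shows that $\gamma_z(X)$ has exactly the two linear factors $X-r$ and $X-r^{-1}$ (the factors $X-r_\pm$ of that theorem), while every irreducible factor of $\overline\gamma_z(X)=\gamma_z(X)/(X^2+(z+2)X+1)$ has common degree $m=\alpha(z,p)$. It remains to identify $\alpha(z,p)$ with $\operatorname{ord}_{\field_p^*}(r)$. Since $r\in\field_p^*$ and $z\neq 0,-4$, Theorem~\ref{thm:artin connection} applies verbatim: the elements $\frac{-(z+2)\pm\sqrt{z^2+4z}}{2}$, which we have seen are $r$ and $r^{-1}$, each have multiplicative order exactly $\alpha(z,p)$. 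Hence $m=\operatorname{ord}_{\field_p^*}(r)$, which is the main assertion.

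For the primitive-root criterion I would compare degrees. The polynomial $\overline\gamma_z(X)$ has degree $(p+1)-2=p-1$ and, by the previous paragraph, splits over $\field_p$ into irreducibles all of the single degree $m$; therefore it is irreducible if and only if it has exactly one such factor, i.e. if and only if $m=p-1$. Since $m=\operatorname{ord}_{\field_p^*}(r)$ and $\#\field_p^*=p-1$, the condition $m=p-1$ says exactly that $r$ generates $\field_p^*$, i.e. that $R$ is a primitive root modulo $p$. This gives the stated equivalence.

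I expect the only real friction to be administrative rather than conceptual. One must verify that the residue $r$ of $R$ in the hypothesis genuinely coincides with $\frac{-(z+2)\pm\sqrt{z^2+4z}}{2}$ used in Theorems~\ref{thm:main2} and \ref{thm:artin connection} (the short computation above), and one must be explicit about the degenerate values $z\in\{0,-4\}$ and the finitely many primes dividing $R(R^2-1)$. The case $z=-4$ (that is, $r=1$) is the genuinely exceptional one: there $X^2+(z+2)X+1=(X-1)^2$ has a repeated root, and $\overline\gamma_{-4}$ is irreducible of degree $p$ by Corollary~\ref{z=-4} even though $r=1$ has order $1$, so the clean biconditional really does require $r\neq 1$. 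No new estimate or construction is needed beyond the already-established results.
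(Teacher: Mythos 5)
Your proposal is correct and matches the paper's (implicit) argument: the theorem appears there with no separate proof, prefaced only by ``Combining our results we have,'' and the intended combination is exactly the one you carry out --- Theorem~\ref{thm:main2} for the shape of the factorization, Theorem~\ref{thm:artin connection} (equivalently Theorem~\ref{thm:better artin connection}) to identify the common degree with the order of $r$ in $\field_p^*$, and a degree count for the irreducibility criterion. Your extra care about the finitely many primes dividing $R(R^2-1)$, where $r$ degenerates to $0$ or $\pm 1$ and one must fall back on Corollary~\ref{z=-4}, is a point the paper glosses over but does not alter the approach.
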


In general, given $r \in \field_p\setminus \set{0,\pm 1}$ we consider the set map $\sigma(r) = -r-2-r^{-1}$.  Then  $\im(\sigma) \subset \field_p$ is the set of elements $z \in \field_p$ such that $z^2+4z$ is a non-zero square.  We have $\sigma(r) = \sigma(r^{-1})$ so that $\sigma$ is a $2$ to $1$ map. Hence
    $$
        \#\im(\sigma) = (p-3)/2\ .
    $$
We also recall that the number of elements of order $k$ in the cyclic group $C_n$ is given by the Euler function $\phi(k)$ for $k$ dividing $n$.  We recall also that $\gamma_0(X) = X^{p+1}+X^p+X+1 = (X+1)^{p+1}$, for all $p$.   We note, however, that for $r = \pm 1$, where we have $\sigma(1) = -4$ and $\sigma(-1) = 0$. By \corref{z=-4} we know that  when $r=-1 = r^{-1}$, we have $z=-4$ and $\gamma_{-4}(X)$ has $1$ linear factor and $1$ irreducible factor of degree $p$.  Hence $\alpha(-4,p)=p$.

\begin{exa}\label{p=19}
We consider here the prime $19$ and the set
    $$
        \im(\sigma) = \set{1,5,7, 8,10,14,16,18}\ ,
    $$
with $8$ elements.
    \begin{enumerate}
        \item The cyclic group $\field_{19}^*$ has $\phi(18)=6$ primitive elements.  These come in the pairs
        $(r,1/r)$ = $(2,10), (3,13), (14,15)$ associated to the values $5$, $1$, and $7$ respectively.  For these three values $\gamma_z(X)$ has $2$ linear factors and one irreducible factor of degree $18$.
        Hence $\alpha(1,19)=\alpha(5,19)=\alpha(7,19)=18$.
        \item $\field_{19}^*$ has $\phi(9) = 6$ elements of order $9$, $(r,r^{-1})=(4,5), (6,16), (9,17)$ associated to the values $4,14,10$ respectively.
          For these three values of $z$ we have that $\gamma_{z}(X)$ has $2$ linear factors and $2$ irreducible factors of degree $9$.
            Hence $\alpha(4,19)=\alpha(10,19)=\alpha(14,19)=9$.
        \item $\field_{19}^*$ has $\phi(6) = 2$ elements of order $6$, $(r,r^{-1})=(8,12)$ associated to the value $16$.  We have that $\gamma_{16}(X)$ has $2$ linear factors and $3$ irreducible factors of degree $6$.  Hence $\alpha(16,19)=6$.
        \item $\field_{19}^*$ has $\phi(3)=2$ elements of order $3$ , $(r,r^{-1}) = (7,11)$ associated to the value $18$.  We have that $\gamma_{18}(X)$ has $2$ linear factors and $6$ irreducible factors of degree $3$.  Hence $\alpha(18,19)=\alpha(-1,19)=3$.
    \end{enumerate}
\end{exa}

\section{Trinomials}
We have $\gamma_z(X) = X^{p+1} +(z+1)X^p + X + 1$.  We may use Descartes' transformation to eliminate the term of degree $p$.  Here this is done by evaluating $\gamma_z(X-z-1)$ as we did in the proof of Theorem~\ref{thm:Frob2}.  Putting $\beta_z(X) := \gamma_z(X-z-1)$ we obtain the trinomial $\beta_z(X) = X^{p+1} - zX - z$.  Of course the degrees of the irreducible factors of $\beta_z(X)$ are the same as those for $\gamma_z(X)$.  We may further simplify by using $\delta_z(X) := z^{-2}\beta_z(zX) = X^{p+1} - X - z^{-1}$.   Again the degrees of the irreducible factors of $\delta_z(X)$ are the same as those for $\beta_z(X)$ and $\gamma_z(X)$.

The problem of factoring trinomials over finite fields has been considered by many authors. In particular trinomials of the form $X^{p^k+1} - aX -b$ were carefully studied by S.~Agou, \cite{Agou-IrreduciblePolys:84, Agou-Hyponormal:81, Agou:77}.  Agou derived intricate algebraic conditions to explicitly determine the degrees of the irreducible factors of such polynomials.  These conditions can be shown to be related to the polynomials $f_{m,p}(X)$.  While Agou's work is sometimes overlooked in the literature, other authors have also considered such trinomials, Bluher, \cite{Bluher:04}, Coulter and Henderson, \cite{Coulter+Henderson:04},  Kim and Mesnager, \cite{Kim+Mesnager:20}, and Stichtenoth and Topuzou\u{g}lu, \cite{Stichtenoth+Topuzouglu:12}.
\begin{thm}
Consider the trinomial $X^{p+1}-aX-b\in \FF_p[X]$ with $a \neq 0$ and put $\zeta := b/a^2$.  Let $r$ be a root of $\zeta X^2 +(2\zeta+1)X + \zeta$ and let $m$ denote the multiplicative order of $r$ as an element of $\field_{p^2}$.  Then the multiset of degrees of the irreducible factors of $X^{p+1}-aX-b$ is
    \begin{enumerate}
        \item $(m,m,\dots,m)$ with $m \geq 3$, if $1+4\zeta$ is a non-square in $\field_p$;
        \item $(1,1,m,m,\dots,m)$ with $m \geq 3$, if $1+4 \zeta$ is a non-zero square;
        \item $(1,p)$ if $\zeta=-1/4$; \label{case3,z=-4}
        \item $(1,1,\dots,1)$ if $\zeta = 0$.\label{case4, z=0}
        \end{enumerate}
\end{thm}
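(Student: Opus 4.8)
The plan is to reduce the general trinomial to the polynomial $\gamma_z$ already analysed above by a single linear change of variable, and then read off the factorization type from Theorem~\ref{thm:main2}, Theorem~\ref{thm:better artin connection} and Corollary~\ref{z=-4}. First I would dispose of the degenerate case $\zeta=0$, i.e.\ $b=0$, directly: here $X^{p+1}-aX = X(X^p-a) = X(X-a)^p$ since $a\in\field_p$, so every irreducible factor is linear and the degree multiset is $(1,1,\dots,1)$, which is case~(\ref{case4, z=0}).

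For $b\neq 0$ I put $z:=a^2/b = 1/\zeta \in \field_p^*$. Applying the substitution $X\mapsto (b/a)X$ and then multiplying by $(a/b)^{p+1}=a^2/b^2$ transforms $X^{p+1}-aX-b$ into $X^{p+1}-zX-z=\beta_z(X)$, where both lower coefficients collapse to the single value $z=a^2/b$ after using $a^p=a$ and $b^p=b$. Since scaling by a nonzero constant and an invertible substitution preserve the multiset of degrees of irreducible factors, the trinomial shares its degree multiset with $\beta_z(X)$, hence with $\gamma_z(X)$ via the shift $\beta_z(X)=\gamma_z(X-z-1)$.

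Next I would match the auxiliary data. Dividing $\zeta X^2+(2\zeta+1)X+\zeta$ by $\zeta$ gives $X^2+(z+2)X+1$, whose roots are exactly the elements $r_\pm=\frac{-(z+2)\pm\sqrt{z^2+4z}}{2}$ of Theorem~\ref{thm:better artin connection}; thus the $r$ of the statement is one of these $r_\pm$, and its multiplicative order $m$ in $\field_{p^2}^*$ equals $\alpha(z,p)$, the order of appearance of $z$, by Theorem~\ref{thm:better artin connection}. Moreover $z^2+4z=(1+4\zeta)/\zeta^2$, so, $\zeta^2$ being a nonzero square, $z^2+4z$ and $1+4\zeta$ have the same quadratic character. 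The remaining three cases now follow: when $1+4\zeta$ is a non-square, Theorem~\ref{thm:main2}(2) gives no linear factors and all irreducible factors of $\gamma_z=\overline\gamma_z$ of degree $m$, yielding $(m,\dots,m)$; when $1+4\zeta$ is a nonzero square, Theorem~\ref{thm:main2}(4) gives exactly two linear factors with the rest of degree $m$, yielding $(1,1,m,\dots,m)$; and $\zeta=-1/4$ is the case $z=-4$, where Corollary~\ref{z=-4} gives one linear factor together with the single irreducible factor $\overline\gamma_{-4}$ of degree $p$, yielding the multiset $(1,p)$ of case~(\ref{case3,z=-4}). Finally $m\geq 3$ in the first two cases because $f_{1,p}=f_{2,p}=1$ are nonzero constants, so the least $m$ with $f_{m,p}(z)=0$ is at least $3$.

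Essentially all of the substantive content, namely that the non-linear irreducible factors share the common degree $m$ equal to the order of $r$, is imported from the earlier theorems, so no genuinely new argument is needed. The only point demanding care is the bookkeeping of the change of variable: getting the scalar $(a/b)^{p+1}=a^2/b^2$ right so that the $X$-coefficient and the constant term both collapse to $z=a^2/b$, and correctly comparing the quadratic character of $z^2+4z$ with that of $1+4\zeta$. This is the step I would verify most carefully, though I expect it to present no real obstacle.
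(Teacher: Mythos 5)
Your proposal is correct and follows essentially the same route as the paper: handle $b=0$ separately, reduce the trinomial by a linear substitution and rescaling to the family $\beta_z$/$\delta_z$ with $z=1/\zeta$ (the paper uses $X\mapsto aX$ to reach $\delta_{1/\zeta}$, you use $X\mapsto (b/a)X$ to reach $\beta_{1/\zeta}$ -- the same reduction up to a further scaling already noted in the text), match $\zeta X^2+(2\zeta+1)X+\zeta$ with $X^2+(z+2)X+1$ and $1+4\zeta$ with $z^2+4z$, and then quote Theorem~\ref{thm:main2}, Theorem~\ref{thm:better artin connection} and Corollary~\ref{z=-4}. Your change-of-variable bookkeeping checks out, so there is nothing further to add.
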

\begin{proof}
If $b=0$ then $X^{p+1}-aX = X^{p+1} -a^p X = X(X^p-a^p) = X(X-a)^p$.   Suppose then that $b\neq 0$ and thus $\zeta \neq 0$.  Evaluating at $aX$ and dividing by $a^2$ we have $\big((aX)^{p+1} - a(aX) - b\big)/a^2 = X^{p+1} - X - b/a^2 = \delta_{\zeta^{-1}}(X)$.  Thus the factorization of $X^{p+1}-aX-b$ has the same form as the factorization of $\beta_{\zeta^{-1}}(X)$.  Put $z=\zeta^{-1}$.  The form of the factorization of $\beta_z(X)$ into irreducibles is governed by whether $z^2+4z$ is a square and by the multiplicative order $m$ of $r$ where $r$ is a root of $X^2+(z+2)X+1$.
  Since  $z^2+4z=\zeta^{-2}+4\zeta^{-1} = (1+4\zeta) / \zeta^2$ and
   $X^2+(z+2)X+1 = X^2 +(2+\zeta^{-1})X+1 = \big(\zeta X^2 + (2\zeta+1)X + \zeta\big)/\zeta$ the result follows.
   Note that $m=1$ corresponds to $z=-4$ which is case~(\ref{case3,z=-4}) and $m=2$ corresponds to $z=0=b$  which is case~(\ref{case4, z=0}).
\end{proof}

\begin{rem}
We may invert our point of view and use our results to obtain irreducible polynomials in $\field_p[X]$ of certain specified degrees.  Let $m$ be a positive integer which divides $(p^2-1)/2$.  Then $m$ divides either $p-1$ or $p+1$.   Choose $r \in \field_{p^2}$ of order $m$ and define $z := -r-2-1/r$.
By Theorem~\ref{artin result}, every irreducible factor of $\overline{\gamma}_z(X)$ has degree $m$.
\end{rem}

 Most of the results proved in this article may be extended to the case where $z \notin \field_p$.  These generalizations will be discussed in \cite{Campbell+Wehlau:20}.


\section{Primes with large Fibonacci orders of appearance}

To study classical orders of apparition in the Fibonacci sequence we focus on $z=1$ and $r$ which satisfies $-r-2-1/r = 1$.  Thus $r$ and its inverse are the two numbers $r_{\pm} = \frac{-3\pm\sqrt{5}}{2}$.

The set of primes $p$ which satisfy $\alpha(p) = p+1$ is of some interest.  The list of the first few primes which satisfy this condition are indexed as sequence A000057 in the OEIS,  \cite{OEIS}.  As observed by Cubre and Rouse, \cite{Cubre+Rouse:14},  it is not yet known if there are infinitely many primes $p$ for which $\alpha(p) = p+1$. For $\alpha(p)=p+1$ we must have that $z^2+4z=5$ is a non-square, i.e., $p \equiv \pm 2 \pmod{5}$.  From  Theorem~\ref{artin result}  we see that $\alpha(p) = p+1$ if and only if  the two numbers $r_{\pm} \in\FF_{p^2}$ each have order $p+1$.

Consider now the set of primes $p$ for which $\alpha(p) = p-1$.  These are the primes for which $p \equiv \pm 1 \pmod{5}$ and the two numbers $r_\pm$ are both primitive roots mod $p$.  By (a generalized version of) Artin's conjecture this happens infinitely often which would imply that this set of primes is infinite.  Shanks and Taylor discovered this connection already in their study of so-called Fibonacci primitive roots, see \cite{Shanks:72, Shanks+Taylor:73}.  They also derived from Artin's conjecture a conjectural density, among all primes, of 0.177135... for the primes $p$ which satisfy $\alpha(p)=p-1$. However it remains unknown whether this set of primes is infinite.

\appendix
\section{Proving the recusive formula}
In this appendix we give a proof of Proposition~\ref{recursion}.  There is nothing deep in the proof which uses only the properties of bracket polynomials.  However, the computations are somewhat involved.

Recall that $\nu = -\frac{[0,2][1,3]}{[0,1][2,3]}$.

\begin{lem}
 $$  \nu^{\theta(2k-1,p)} = \frac{[1,2][2k,2k+2][2k+1,2k+2]}{[0,1][0,2][2k,2k+1]}[0,1]^{-2\theta(2k+1,p)}\text{ for }k\geq 1.$$
\end{lem}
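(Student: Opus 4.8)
The plan is to push both sides to monomials in the two brackets $[0,1]$ and $[0,2]$ and then match exponents; once this is done the two sides are literally the same expression and there is nothing left to prove. The engine throughout is Lemma~\ref{squarebrackets}(\ref{pth powers of brackets}) in the iterated form $[a,b]=[0,b-a]^{p^{a}}$, together with the closed form of $\theta$.

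First I would rewrite $\nu$ compactly. Starting from $\nu=-[0,2][1,3]/([0,1][2,3])$ and using $[1,3]=[0,2]^{p}$ and $[2,3]=[1,2]^{p}=[0,1]^{p^{2}}$, the product collapses to
\[
  \nu=-\,\frac{[0,2]^{p+1}}{[0,1]^{p^{2}+1}}\ .
\]
Next I would invoke the geometric-sum evaluation $\theta(r,p)=(p^{r+1}+(-1)^{r})/(p+1)$, so that $\theta(2k-1,p)=(p^{2k}-1)/(p+1)$. Raising the displayed form of $\nu$ to this power, the factor $(p+1)$ cancels cleanly in the $[0,2]$-exponent, giving
\[
  \nu^{\theta(2k-1,p)}=(-1)^{\theta(2k-1,p)}\,[0,2]^{\,p^{2k}-1}\,[0,1]^{-(p^{2}+1)(p^{2k}-1)/(p+1)}\ .
\]
The sign disappears over a field of characteristic $p$: for odd $p$ the integer $\theta(2k-1,p)$ is even (it is a sum of $2k$ odd terms with alternating signs), while for $p=2$ we have $-1=1$. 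Hence $(-1)^{\theta(2k-1,p)}=1$ and the left-hand side is a pure monomial in $[0,1]$ and $[0,2]$.

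For the right-hand side I would expand each bracket via $[a,b]=[0,b-a]^{p^{a}}$. Every bracket appearing has index-gap $1$ or $2$, so everything becomes a power of $[0,1]$ or $[0,2]$: namely $[1,2]=[0,1]^{p}$, $[2k,2k+2]=[0,2]^{p^{2k}}$, $[2k+1,2k+2]=[0,1]^{p^{2k+1}}$, and $[2k,2k+1]=[0,1]^{p^{2k}}$. Combining these with $[0,1]^{-2\theta(2k+1,p)}$ and $\theta(2k+1,p)=(p^{2k+2}-1)/(p+1)$, the $[0,2]$-exponent is again $p^{2k}-1$, matching the left-hand side immediately. It then remains only to confirm that the two $[0,1]$-exponents agree; clearing the denominator $p+1$ reduces both of them to the polynomial $p^{2}+1-p^{2k}-p^{2k+2}$, which I would check directly. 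With both exponents matched, the two sides are the identical monomial and the lemma follows.

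The main obstacle here is nothing conceptual but pure bookkeeping: keeping the alternating sums inside $\theta$ straight, and verifying that the sign $(-1)^{\theta(2k-1,p)}$ is absorbed by the characteristic. Once $\nu$ is written as $-[0,2]^{p+1}/[0,1]^{p^{2}+1}$ and every bracket is collapsed to a power of $[0,1]$ or $[0,2]$, the statement is a one-line comparison of exponents.
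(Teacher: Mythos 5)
Your proof is correct, and it takes a genuinely different route from the paper. The paper proves this identity (together with its companion for $\nu^{\theta(2k,p)}$) by induction on $k$: it verifies the case $k=1$ by hand and then uses the multiplicative recursion $\nu^{\theta(2k,p)}=\nu^{\theta(2k-2,p)}\,\nu^{p^{2k}-p^{2k-1}}$, cancelling brackets at each step with repeated appeals to $[i,j]^p=[i+1,j+1]$. You instead observe that every bracket in sight has index-gap $1$ or $2$, so both sides collapse to a single monomial $[0,2]^{a}[0,1]^{b}$ via $[a,b]=[0,b-a]^{p^a}$, and the lemma reduces to the closed-form identities $\theta(r,p)=(p^{r+1}+(-1)^r)/(p+1)$ plus one polynomial identity in $p$, which you state correctly (both $[0,1]$-exponents clear to $p^2+1-p^{2k}-p^{2k+2}$ after multiplying by $p+1$, and both $[0,2]$-exponents are $p^{2k}-1$). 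Your handling of the sign $(-1)^{\theta(2k-1,p)}$ is also right: $\theta(2k-1,p)$ is a signed sum of $2k$ odd integers, hence even, for odd $p$, and the sign is vacuous for $p=2$. What your approach buys is a non-inductive, essentially one-line verification that also makes transparent why the identity holds (both sides are the same monomial in $[0,1]$, $[0,2]$); what the paper's approach buys is that the same telescoping machinery carries over uniformly to the companion lemma and to the main recursion of Proposition~\ref{recursion}, where brackets with larger gaps such as $[0,2k+1]$ appear and cannot be collapsed to powers of $[0,1]$ and $[0,2]$, so your monomial-comparison trick would not extend there.
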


\begin{proof}
When $k=1$ we have
    \begin{align*}
       &\frac{[1,2][2,4][3,4]}{[0,1][0,2][2,3]}[0,1]^{-2\theta(3,p)} = \frac{[2,4]}{[0,2]}\frac{[0,1]^{p+p^3}}{[0,1]^{1+p^2}} [0,1]^{-2\theta(3,p)}\\
       &= \frac{[2,4]}{[0,2]} [0,1]^{-p^3+p^2-p+1} = \frac{[2,4]}{[0,2]} [0,1]^{-\theta(3,p)}\\
      \text{Con}&\text{versely}\\
      \nu^{\theta(1,p)} & = \left(\frac{[0,2][1,3]}{[0,1][2,3]}\right)^{p-1} = \frac{[1,3][2,4]}{[1,2][3,4]} \frac{[0,1][2,3]}{[0,2][1,3]}\\
      &=\frac{[2,4]}{[0,2]} \frac{[0,1]^{1+p^2}}{[0,1]^{p+p^3}} = \frac{[2,4]}{[0,2]} [0,1]^{-\theta(3,p)} \ .
  \end{align*}

For $k \geq 2$ we have
    \begin{align*}
        \nu&{}^{\theta(2k,p)} = \nu^{\theta(2k-2,p)} \nu^{p^{2k}-p^{2k-1}}\\
        &= -\frac{[0,1][0,2][2k-1,2k+1]}{[1,2][2k-1,2k][2k,2k+1]}[0,1]^{2\theta(2k-1,p)}\frac{\nu^{p^{2k}}}{\nu^{p^{2k-1}}}\\
        &= -\frac{[0,1][0,2][2k-1,2k+1]}{[1,2][2k-1,2k][2k,2k+1]} [0,1]^{2\theta(2k-1,p)}\\
        &\qquad\cdot\frac{[2k,2k+2][2k+1,2k+3]}{[2k,2k+1][2k+2,2k+3]}
                               \frac{[2k-1,2k][2k+1,2k+2]}{[2k-1,2k+1][2k,2k+2]} \\
       &= -\frac{[0,1][0,2]}{[1,2][2k+1,2k+2]}\frac{[2k+1,2k+3][2k+1,2k+2]^2}{[2k,2k+1]^2[2k+2,2k+3]} [0,1]^{2\theta(2k-1,p)}\\
       &= -\frac{[0,1][0,2][2k+1,2k+3]}{[1,2][2k,2k+1][2k+2,2k+3]}\frac{[2k+1,2k+2]^2}{[2k,2k+1]^2} [0,1]^{2\theta(2k-1,p)}\\
       &= -\frac{[0,1][0,2][2k+1,2k+3]}{[1,2][2k,2k+1][2k+2,2k+3]}\left(\frac{[0,1]^{p^{2k+1}}}{[0,1]^{p^{2k}}}\right)^2 [0,1]^{2\theta(2k-1,p)}\\
       &= -\frac{[0,1][0,2][2k+1,2k+3]}{[1,2][2k,2k+1][2k+2,2k+3]}\left( [0,1]^{p^{2k+1}-p^{2k}+\theta(2k-1,p)}\right)^2\\
        &= -\frac{[0,1][0,2][2k+1,2k+3]}{[1,2][2k,2k+1][2k+2,2k+3]} [0,1]^{2\theta(2k+1,p)}
    \end{align*}
\end{proof}

\begin{lem}
   $$
        \nu^{\theta(2k,p)} = -\frac{[0,1][0,2][2k+1,2k+3]}{[1,2][2k+1,2k+2][2k+2,2k+3]}[0,1]^{2\theta(2k+1,p)}\text{ for }k\geq 1\ .
   $$
\end{lem}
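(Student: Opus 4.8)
The plan is to obtain the even-index formula directly from the odd-index formula of the preceding lemma, rather than running a fresh induction. Everything is driven by the single arithmetic identity $\theta(2k,p)=p^{2k}-\theta(2k-1,p)$, which is immediate from the definition $\theta(r,p)=p^{r}-p^{r-1}+\dots+(-1)^{r}$. Rewriting it as $\nu^{\theta(2k,p)}=\nu^{p^{2k}}\cdot\nu^{-\theta(2k-1,p)}$ reduces the claim to computing one Frobenius power of $\nu$ and then dividing by the expression supplied by the preceding lemma.

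First I would compute the Frobenius power $\nu^{p^{2k}}$. Beginning from $\nu=-\frac{[0,2][1,3]}{[0,1][2,3]}$ and applying the identity $[i,j]^{p}=[i+1,j+1]$ of Lemma~\ref{squarebrackets}(\ref{pth powers of brackets}) a total of $2k$ times to each of the four brackets, so that $[i,j]^{p^{2k}}=[i+2k,j+2k]$, together with $(-1)^{p^{2k}}=-1$, I obtain
$$\nu^{p^{2k}}=-\frac{[2k,2k+2]\,[2k+1,2k+3]}{[2k,2k+1]\,[2k+2,2k+3]}.$$
Next I would simply invert the preceding lemma; since inversion exchanges numerator and denominator and flips the sign of the $[0,1]$-exponent, this gives
$$\nu^{-\theta(2k-1,p)}=\frac{[0,1][0,2][2k,2k+1]}{[1,2][2k,2k+2][2k+1,2k+2]}\,[0,1]^{2\theta(2k+1,p)}.$$

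The final step is to multiply these two expressions. The bracket $[2k,2k+2]$ appears once in a numerator and once in a denominator, as does $[2k,2k+1]$, and both cancel, leaving exactly
$$\nu^{\theta(2k,p)}=-\frac{[0,1][0,2][2k+1,2k+3]}{[1,2][2k+1,2k+2][2k+2,2k+3]}\,[0,1]^{2\theta(2k+1,p)},$$
which is the asserted identity.

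I do not expect a genuine obstacle: this is a bookkeeping computation with Dickson brackets in the same spirit as the preceding lemma, with no separate base case needed beyond what that lemma already provides. The only points demanding care are the sign $(-1)^{p^{2k}}=-1$, which holds in every characteristic, the uniform index shift $[i,j]^{p^{2k}}=[i+2k,j+2k]$ across all four brackets of $\nu$, and the exponent arithmetic on $[0,1]$ under inversion. A fully equivalent alternative — the route that parallels the displayed proof of the preceding lemma — is to induct on $k$ via $\theta(2k,p)=\theta(2k-2,p)+p^{2k}-p^{2k-1}$ using the even formula at $k-1$; I prefer the direct derivation above because it is shorter and sidesteps the separate verification of an even-length base case.
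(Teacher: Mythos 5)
Your proof is correct, but it takes a genuinely different route from the paper's. The paper proves this identity by its own induction on $k$: a base case at $k=1$ computing $\nu^{\theta(2,p)}=\nu^{p^2-p+1}$ by hand, and an induction step driven by $\theta(2k,p)=\theta(2k-2,p)+p^{2k}-p^{2k-1}$, which requires evaluating the ratio $\nu^{p^{2k}}/\nu^{p^{2k-1}}$ and then a longer chain of bracket cancellations. You instead observe that $\theta(2k,p)=p^{2k}-\theta(2k-1,p)$ and deduce the even-index formula in one stroke as $\nu^{p^{2k}}\cdot\nu^{-\theta(2k-1,p)}$, using the odd-index lemma as a black box together with a single application of $[i,j]^{p^{2k}}=[i+2k,j+2k]$. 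This eliminates the induction and the base case entirely, shortens the bracket bookkeeping to two cancellations, and makes transparent that the even formula is essentially the reciprocal of the odd one twisted by one Frobenius power; the cost is that your argument is no longer self-contained but leans entirely on the preceding lemma (which is harmless, since that lemma is stated for all $k\geq 1$ and that is exactly the range you need). Your care with the sign $(-1)^{p^{2k}}=-1$ in every characteristic is exactly the point where the paper's own displayed base case slips (it drops the minus sign when raising $\nu$ to the power $p^2-p+1$), so your derivation in fact confirms that the minus sign in the statement is the correct one.
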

\begin{proof}
When $k=1$
    \begin{align*}
        \nu^{\theta(2,p)} & = \left(\frac{[0,2][1,3]}{[0,1][2,3]}\right)^{p^2-p+1}
         = \frac{[2,4][3,5]}{[2,3][4,5]} \frac{[1,2][3,4]}{[1,3][2,4} \frac{[0,2][1,3]}{[0,1][2,3]}\\
        &= \frac{[0,2][3,5][1,2][3,4]}{[0,1][2,3][2,3][4,5]}
        = \frac{[0,1][0,2][3,5]}{[1,2][3,4][4,5]}\left(\frac{[1,2][3,4]}{[0,1][2,3]} \right)^2\\
        &=  \frac{[0,1][0,2][3,5]}{[1,2][3,4][4,5]} [0,1]^{2p + 2p^3 - 2 - 2p^2}
        =  \frac{[0,1][0,2][3,5]}{[1,2][3,4][4,5]} [0,1]^{2\theta(3,p)}\ .
    \end{align*}

For $k \geq 2$ we have
    \begin{align*}
        \nu&{}^{\theta(2k+1,p)} = \nu^{\theta(2k-1,p)} \nu^{p^{2k+1}-p^{2k}}\\
        &= \frac{[1,2][2k,2k+2][2k+1,2k+2]}{[0,1][0,2][2k,2k+1]}[0,1]^{-2\theta(2k+1,p)}\frac{\nu^{p^{2k+1}}}{\nu^{p^{2k}}}\\
        &= \frac{[1,2][2k,2k+2][2k+1,2k+2]}{[0,1][0,2][2k,2k+1]}[0,1]^{-2\theta(2k+1,p)}\\
        &\qquad\cdot\frac{[2k+1,2k+3][2k+2,2k+4]}{[2k+1,2k+2][2k+3,2k+4]} \frac{[2k,2k+1][2k+2,2k+3]}{[2k,2k+2][2k+1,2k+3]} \\
        &= \frac{[1,2][2k+2,2k+4][2k+2,2k+3]}{[0,1][0,2][2k+3,2k+4]}[0,1]^{-2\theta(2k+1,p)} \\
       &= \frac{[1,2][2k+2,2k+4][2k+3,2k+4]}{[0,1][0,2][2k+2,2k+3]}\left(\frac{[2k+2,2k+3]}{[2k+3,2k+4]}\right)^2 [0,1]^{-2\theta(2k+1,p)} \\
       &= \frac{[1,2][2k+2,2k+4][2k+3,2k+4]}{[0,1][0,2][2k+2,2k+3]}\left(\frac{[0,1]^{p^{2k+2}}}{[0,1]^{p^{2k+3}}}\right)^2 [0,1]^{-2\theta(2k+1,p)} \\
       &= \frac{[1,2][2k+2,2k+4][2k+3,2k+4]}{[0,1][0,2][2k+2,2k+3]}\left([0,1]^{p^{2k+2}-p^{2k+3}-\theta(2k+1,p)}\right)^2  \\
       &= \frac{[1,2][2k+2,2k+4][2k+3,2k+4]}{[0,1][0,2][2k+2,2k+3]} [0,1]^{-2\theta(2k+3,p)}
    \end{align*}
\end{proof}
Define
    \begin{align*}
        F_{2k+1,p} &:= (-1)^{k}[0,2k+1] [0,1]^{-\theta(2k,p)}\quad\text{for }k\geq 1 \ \ \text{ and}\\
        F_{2k,p} & := (-1)^{k+1}\frac{[1,2]}{[0,1][0,2]}[0,2k] [0,1]^{-\theta(2k-1,p)} \quad\text{for }k\geq 1 \ .
     \end{align*}
We recall Proposition~\ref{recursion}: \recursionProp*
\begin{proof}
  The proof is by induction on $m$.  For the base we consider $m=1,2,3$ separately.
    \begin{align*}
        F_{1,p} :=& [0,1][0,1]^{-\theta(0,p)} = [0,1][0,1]^{-1} = 1.\\
        F_{2,p} :=& \frac{[1,2][0,2]}{[0,1][0,2]}[0,1]^{-\theta(1,p)} = \frac{[0,1]^p}{[0,1]}[0,1]^{-(p-1)} = 1.\\
        F_{3,p} :=& - [0,3] [0,1]^{-\theta(2,p)} = -[0,3] [0,1]^{-p^2+p-1}\\
        =& \frac{-[0,3][1,2]}{[0,1][2,3]}= \frac{-[0,2][1,3] + [0,1][2,3]}{[0,1][2,3]} = \nu + 1 = \nu^{\theta(0,p)} F_{2,p} + F_{1,p}.
    \end{align*}

Now the induction step.  We treat even and odd values of $m$ separately.
    \begin{align*}
        \nu&^{\theta(2k-1,p)} F_{2k+1,p} + F_{2k,p}\\
        & = \nu^{\theta(2k-1,p)} (-1)^k[0,2k+1] [0,1]^{-\theta(2k,p)} + (-1)^{k+1}\frac{[1,2][0,2k]}{[0,1][0,2]} [0,1]^{-\theta(2k-1,p)}\\
        &=\left(\frac{[1,2][2k,2k+2][2k+1,2k+2]}{[0,1][0,2][2k,2k+1]}[0,1]^{-2\theta(2k+1,p)} \right)\\
        &\qquad\cdot \left((-1)^{k}\frac{[0,2k+1]}{[2k+1,2k+2]} [0,1]^{\theta(2k+1,p)}\right) + (-1)^{k+1}\frac{[1,2][0,2k]}{[0,1][0,2]} [0,1]^{-\theta(2k-1,p)}\\
        &= (-1)^{k}\frac{[1,2][2k,2k+2][0,2k+1]}{[0,1][0,2][2k,2k+1]}[0,1]^{-\theta(2k+1,p)}\\
        & \qquad\qquad+ (-1)^{k+1}\frac{[1,2]}{[0,1][0,2]}\frac{[0,2k][2k+1,2k+2]}{[2k,2k+1]} [0,1]^{-\theta(2k+1,p)}\\
        &= \frac{(-1)^k([2k,2k+2][0,2k+1] -[0,2k][2k+1,2k+2])}{[2k,2k+1]} \frac{[1,2]}{[0,1][0,2]}[0,1]^{-\theta(2k+1,p)} \\
        &=  \frac{(-1)^{k}[0,2k+2][2k,2k +1]} {[2k,2k+1]} \frac{[1,2]}{[0,1][0,2]}[0,1]^{-\theta(2k+1,p)}\\
        &= (-1)^{k}\frac{[1,2][0,2k+2]}{[0,1][0,2]}[0,1]^{-\theta(2k+1,p)} \\
        &= F_{2k+2,p}
    \end{align*}
Finally the induction step for general $m$ of the other parity.
    \begin{align*}
        \nu&^{\theta(2k,p)} F_{2k+2,p} + F_{2k+1,p} = \left(-\frac{[0,1][0,2][2k+1,2k+3]}{[1,2][2k+1,2k+2][2k+2,2k+3]}[0,1]^{2\theta(2k+1,p)} \right)\\
           &\qquad\cdot \left((-1)^k\frac{[1,2][0,2k+2]}{[0,1][0,2]} [0,1]^{-\theta(2k+1,p)}\right)+ (-1)^k[0,2k+1] [0,1]^{-\theta(2k,p)}\\
           &= -(-1)^k\frac{[2k+1,2k+3][0,2k+2]}{[2k+1,2k+2][2k+2,2k+3]}[0,1]^{\theta(2k+1,p)}\\
           & \qquad\qquad+ (-1)^k\frac{[0,2k+1][2k+2,2k+3]}{[2k+1,2k+2][2k+2,2k+3]} [0,1]^{\theta(2k+1,p)}\\
           &= \frac{(-1)^{k}(-[2k+1,2k+3][0,2k+2] + [0,2k+1][2k+2,2k+3])}{[2k+1,2k+2][2k+2,2k+3]} [0,1]^{\theta(2k+1,p)} \\
           &=  \frac{(-1)^{k+1}[0,2k+3][2k+1,2k +2]} {[2k+1,2k+2][2k+2,2k+3]} [0,1]^{\theta(2k+1,p)}\\
           &=  (-1)^{k+1}\frac{[0,2k+3][2k+3,2k+4]} {[2k+2,2k+3][2k+3,2k+4]}[0,1]^{\theta(2k+1,p)}\\
           &= (-1)^{k+1}\frac{[0,2k+3]}{[2k+2,2k+3]}[0,1]^{\theta(2k+1,p)} \\
           &= (-1)^{k+1}[0,2k+3][0,1]^{p^{-(2k+2)+\theta(2k+1,p)}} \\
           &= (-1)^{k+1}[0,2k+3][0,1]^{-\theta(2k+2,p)}\\
           &= F_{2k+3,p}
    \end{align*}
\end{proof}

\section*{Acknowledgments.}
This research is supported in part by the Natural Sciences and
Engineering Research Council of Canada. The symbolic computation
language MAGMA (\url{http://magma.maths.usyd.edu.au/}) was very helpful.

\bibliographystyle{amsplain}
\def\cprime{$'$}
\providecommand{\bysame}{\leavevmode\hbox to3em{\hrulefill}\thinspace}
\providecommand{\MR}{\relax\ifhmode\unskip\space\fi MR }
\providecommand{\MRhref}[2]{%
  \href{http://www.ams.org/mathscinet-getitem?mr=#1}{#2}
}
\providecommand{\href}[2]{#2}

\end{document}